\begin{document}

\title[Embeddability of countably branching bundle graphs]{On the embeddability of countably branching bundle graphs into dual spaces}

\author{Y.~Perreau}
\address{Yoël Perreau, Laboratoire de Math\'ematiques de Besan\c con, Universit\'e Bourgogne Franche-Comt\'e, CNRS UMR-6623, 16 route de Gray, 25030 Besan\c con C\'edex, Besan\c con, France}
\email{yoel.perreau@univ-fcomte.fr}

\thanks{The author is supported by the French ``Investissements d'Avenir'' program, project ISITE-BFC (contract ANR-15-IDEX-03).}
\keywords{}
\subjclass[2010]{46B20, 46B80, 46B85, 46T99}

\maketitle

\begin{abstract}

In this note the result from \cite{Swift} by A. Swift concerning the embeddability of countably branching bundle graphs into Banach spaces is extended from the context of reflexive spaces with an unconditional asymptotic structure to the context of dual spaces with a weak$^*$ unconditional asymptotic structure. We also investigate weak$^*$ asymptotic midpoint uniform convexity in dual spaces which turns out to be equivalent to its weak version in general and to the standard weak$^*$ asymptotic uniform convexity up to renorming in dual spaces with a weak$^*$ unconditional asymptotic structure.
\end{abstract}

\tableofcontents

\newpage 

\section*{Introduction}

The main result in the present paper in the following.

\begin{thmintro}\label{thmA}

Let $X$ be a separable Banach space and let $G$ be a non-trivial countably branching bundle graph. Let us assume that the Slzenk index $S_Z(X)$ is strictly greater than the first infinite ordinal $\omega$ and that the dual space $X^*$ has a weak$^*$ unconditional asymptotic structure.  Then every non-trivial countably branching bundle graph embeds Lipschitz in $X^*$ with distortion less than $6+\varepsilon$.

\end{thmintro}

Combined with previous results from \cite{Baudierandco} by F. Baudier and his co-authors this yields the following new characterization of weak$^*$ asymptotic uniform convexity up to renorming in dual spaces with a weak$^*$ unconditional asymptotic structure.

 \begin{thmintro}\label{thmB}

Let $X$ be a separable Banach space and let $G$ be a non-trivial countably branching bundle graph. Let us assume that $X^*$ has a weak$^*$ unconditional asymptotic structure. Then the following properties are equivalent.

\begin{enumerate}

\item{ The space $X$ admits an equivalent norm whose dual norm is AUC$^*$} 

\item{ The family $(G^{\otimes n})_{n\geq 1}$ of all countably branching bundle graphs generated by $G$ does not equi-Lipschitz embed into $X^*$. }

\end{enumerate}

\end{thmintro}

Those results fit in the asymptotic counterpart of the so called Ribe program which intends to characterize asymptotic properties of Banach spaces by purely metric ones. Interested readers are encouraged to look at the introduction in \cite{Baudierandco} for a detailed background around this kind of results.

The proof of the main result consists in pushing forward methods from \cite{Baudierandco} in order to give informations about the weak$^*$ asymptotic structure of dual spaces satisfying the conditions stated in the theorem and then in appealing to known embeddability results. More precisely we show that such a dual space must contain $\ell_\infty^n$ in its $n^{\text{th}}$ weak$^*$ asymptotic structure for every $n$ and this leads to the construction of a certain family of trees (the so called good-$\ell_\infty$ trees of arbitrary height) which is known by Swift's work in \cite{Swift} to allow for the embeddability of the graphs.

Taking a closer look at the proof it appears that one can forget the unconditionality condition by focusing on duals of separable spaces for which the unit ball contains sort of arbitrarily high multi-directional separated weak$^*$-convergent trees. This is the following property $(*)$.

\begin{align*}  (*):\ \  & \exists\varepsilon>0:\  \forall n\geq 1,\ \exists (x_s^*)_{s\in T_n}\subset X^* \text{ weak$^*$-null}:\ \ x^*_\emptyset\in B_{X^*},\ \forall s\neq \emptyset,\ \norme{x_s^*}\geq\varepsilon, \\
& \text{and } \forall s\in T_n,\ \abs{s}=n,\ \forall \varepsilon_1,\dots,\varepsilon_n\in\{-1, 1\},\ x^*_\emptyset+\sum_{i=1}^n \varepsilon_ix^*_{s\lvert i} \in B_{X^*}. \end{align*}

The result is then that the dual of a separable space with property $(*)$ must contain $\ell_\infty^n$ in its $n^{\text{th}}$ weak$^*$-asymptotic structure for all $n$ and thus is also a target space for the embeddability of families of bundle graphs. This observation lead us to introduce and to study a weak$^*$ asymptotic midpoint property on dual spaces (property AMUC$^*$). Our hope was that we would be able to develop a renorming theory to relate this property to the preceding one and then  obtain a nice metric characterisation of it in terms of embeddability of families of bundle graphs in the class of duals of separable spaces. But although it is straightforward to show that a dual with property AMUC$^*$ satisfies property $\neg(*)$ we are not able yet to obtain a renorming result that would give us the converse implication in full generality. Still we can restrict ourselves to the context of duals with a weak$^*$ unconditional asymptotic structure in which we can use the AUC$^*$ renorming theory to get the following linear result.
 
 \begin{thmintro}\label{thmC}

Let $X$ be a separable Banach space and let us assume that $X^*$ has a weak$^*$ unconditional asymptotic structure. Then the following properties are equivalent.

\begin{enumerate}

\item{ $S_Z(X)\leq \omega$ }

\item{ $X^*$ admits an equivalent AUC$^*$ norm} 

\item{ $X^*$ admits an equivalent AMUC$^*$ norm} 

\item{ $X^*$ satisfies property $\neg(*)$.}

\end{enumerate}

\end{thmintro}

Actually an obstruction to a more general renorming result for AMUC$^*$ is the lack of a dual property (a kind of AUS/AUC$^*$ duality) and the fact that we are not able to define a suitable derivation in order to find a replacement for the Szlenk index in this context.

Another goal of this study was to investigate the question of the equivalence of property AUC and property AMUC up to renorming. A corollary of the preceding theorem is that it holds for duals of separable spaces with a weak$^*$ unconditional asymptotic structure. The question of whether this structural condition is essential or not remains widely  open (even for reflexive spaces). By the way let us mention that is is not even known whether a Banach space $X$ with property AMUC has weak Szlenk index $w-S_Z(X)<\infty$ or not (in other word whether AMUC implies PCP or not) except again in the case it has an unconditional asymptotic structure. However we were able to get some information on the subject. We observed that properties AMUC and AMUC$^*$ are in fact equivalent on dual spaces. This is due to the tight relation between midpoint properties and the size of approximate midpoints (as stated in \cite{lenses} in the asymptotic context). Let us mention that this was already exploited by Kalton in his paper \cite{Kalton} where he tried to answer the question of the stability of property AUC under coarse-Lispchitz maps and that it was a also a key tool to implement a self-improvement argument in \cite{Baudierandco} in order to show non-embeddability of countably branching bundle graphs in AUC spaces. This observation has the following consequence: the dual of the James tree space $JT$ is AMUC$^*$. So since $JT$ admits no equivalent norm whose dual norm is AUC$^*$ the two properties are not equivalent up to renorming.

Let us discuss briefly the organisation of the paper. In section $1$ we introduce all the asymptotic structural properties we will need in the paper and we translate them in terms of properties of trees. In section $2$ we recall the definition of the Szlenk index $S_Z(X)$ of a Banach space $X$ and we show that under some separability and unconditional asymptotic structural assumptions, a Banach space $X$ such that $S_Z(X)>\omega$ satisfies $\ell_1^n\in \{X\}_n$ and $\ell_\infty^n\in \{X^*\}^{w^*}_n$ for every $n\geq 1$. In section $3$ we introduce property AMUC$^*$ and write down all the preceding comments concerning its properties. In section $4$ we recall the definition of bundle graphs and we prove the main result of the paper and his corollaries. We also give a small result on the embeddability of such graphs into Banach spaces with a $c_0$-spreading model. We conclude the paper with a section $5$ devoted to questions and comments. In particular we say a few words about the separability assumption in theorems $B$ and $C$ and we show that it is possible to get rid of it if we work with WCG Banach spaces by passing to a separable quotient.

\newpage

\section{Preliminaries: trees and asymptotic structure in dual spaces}

In this section we introduce several properties of dual spaces around the concept of weak$^*$ asymptotic structure and we express them in a suitable way for our later study in terms of behaviour of weak$^*$-null normalized trees. Although we might refer to it in the sequel we won't give the definitions of (weak) asymptotic structure properties and the corresponding expressions in terms of weak-null trees in general Banach spaces to avoid repetitions. Basically they are written in the same way as their weak$^*$ analogue but using the weak topology instead (that is mainly by replacing weak$^*$-null objects by weak null objects and weak$^*$-closed subspaces by closed subspaces) and replacing separability assumptions on the space by separability assumptions on the dual when needed. We refer to \cite{Baudierandco} for precise statements.

\subsection{Trees in dual spaces} Let us start by introducing the notion of trees in a dual space.

For all $N\geq 1$, let $T_N=\{\emptyset\}\cup\bigcup_{n=1}^N\N^n$ be the countably branching tree of height $N$ (without any metric consideration). Also let $T_\infty=\bigcup_{N\geq 1}T_N$. We will use the following notations. 

\begin{enumerate} 

\item{ For all $s=(s_1,\dots, s_n)\in T_\infty$ non empty, $\abs{s}:=n$ is the length of the sequence $s$, $s_{\lvert k}:=(s_1,\dots s_k)$ for every $1\leq k\leq n$ and $s_{\lvert 0}:=\emptyset$. Also, $s^-:=s_{\lvert n-1}$. }

\item{ For all $s=(s_1,\dots s_n)$ and $t=(t_1,\dots t_m)$ in $T_\infty$, $s\smallfrown t:=(s_1,\dots s_n,t_1,\dots, t_m)$. }

\end{enumerate}

A subtree $T$ of $T_N$ is a non empty subset of $T_N$ containing all the predecessors of its elements. A full subtree of $T_N$ is a subtree $T$ of $T_N$ such that all the sets $T\cap\{t\smallfrown n,\ n\in \N\}$ with $t\in T\cap T_{N-1}$ are infinite.

Let $T$ be a full subtree of $T_N$. For all $s,t\in T$, we write $s\leq t$ whenever the sequence $t$ is an extension of the sequence $s$ and we say that $s$ is an ancestor of $t$. This defines an ordering of $T$. We say that an enumeration $(\sigma_i)_{i\geq 1}$ of $T$ is a compatible linear ordering of $T$ if it satisfies the following condition: for every $i\geq 1$ and for every ancestor $s$ of $\sigma_i$, there is a $j<i$ such that $s=\sigma_j$.

\begin{rmk}

For all $s=(s_1,\dots,s_n)\in T$, let $f(s)=\sum_{i=1}^ns_i$ and let $f(\emptyset)=0$. If an enumeration $(\sigma_i)_{i\geq 1}$ of $T$ satisfies the condition: $f(\sigma_i)\leq f(\sigma_j)$ whenever $i\leq j$, then it is a compatible linear ordering of $T$. 

In the same way, if we replace $T$ by a full subtree of $[\N]^{\leq N}=\{s=(s_1,\dots, s_n)\in T_N:\ s_1<\dots<s_n\}$ and if we replace the function $f$ by the maximum in the preceding condition, we also get a compatible linear ordering of $T$.

It can be easily checked that all the results involving the notion of compatible linear ordering in this paper can be proved by using one of the two conditions above instead and thus that they are compatible with definitions and results either from \cite{Swift} or \cite{Baudierandco}.

\end{rmk}

Let $X$ be a Banach space and let $X^*$ be its dual space. A tree (of height $N$) in $X^*$ is a subset of $X^*$ indexed by a full subtree $T$ of $T_N$. Let $(x^*_s)_{s\in T}$ be a tree in $X^*$. We will use the following denominations. 

\begin{enumerate}

\item{ The root is the element $x^*_\emptyset$. An unrooted tree is a tree deprived of its root.}

\item{ A node is a set of the form $\{x^*_{t\smallfrown n},\ n\in \N,\ t\smallfrown n\in T\}$ with $t\in T\cap T_{N-1}$. }

\item{ A branch is a set of the form $\{x^*_\emptyset, x^*_{s_{\lvert 1}},\dots, x^*_s\}$  where $s\in T$ is a sequence of maximal length. If we are working with an unrooted tree, we omit $x^*_\emptyset$ in this definition. }

\item{ A leaf is an element $x^*_s$ with $s\in T$ of maximal length. }

\end{enumerate}

We say that the tree is bounded (respectively normalized) if the corresponding subset is bounded (respectively is contained in the unit sphere of $X^*$). We say that it is weak$^*$ null ($w^*$-null) if every node forms a weak$^*$ null sequence in $X^*$, that is to say if $w^*-\lim_n x^*_{t\smallfrown n}=0$ for all $t\in T$.

\subsection{Asymptotic structure of a dual space} Now let us talk about the weak$^*$ asymptotic structure of a dual space.

Let $X$ be a Banach space, let $X^*$ be its dual space and let $E$ be an $n$-dimensional Banach space with a normalized basis $(e_i)_{i=1}^n$ for some $n\geq 1$. Let us denote by $\wcof{X^*}$ the set of all weak$^*$ closed subspaces of $X^*$ of finite co-dimension. We say that $E$ is in the $n^{\text{th}}$ weak$^*$ asymptotic structure of $X^*$ up to a constant $C\geq 1$ if we have the following property:  \begin{align*}
\forall \varepsilon>0,\ \forall X_1\in \wcof{&X^*},\ \exists x^*_1\in  S_{X_1},\ \dots,\ \forall X_n\in \wcof{X^*},\ \exists x^*_n\in S_{X_n}: \\
&\text{ $(x^*_i)_{i=1}^n$ is $C(1+\varepsilon)$-equivalent to $(e_i)_{i=1}^n$. }
\end{align*}  If $C=1$, we say that $E$ is in the $n^{\text{th}}$ weak$^*$ asymptotic structure of $X^*$ and we write $E\in \{X^*\}^{w^*}_n$.

We say that $X^*$ has a weak$^*$ unconditional asymptotic structure with constant $C\geq 1$ if it satisfies:  \begin{align*}
\exists X_1\in \wcof{X^*},\ \forall x^*_1&\in  S_{X_1},\ \dots,\ \exists X_n\in \wcof{X^*},\ \forall x^*_n\in S_{X_n}: \\
&\text{ $(x^*_i)_{i=1}^n$ is $C$-unconditional. } 
\end{align*}

\begin{rmk}

This properties can be reformulated by using the langage of games between two players. Here one player picks weak$^*$ closed subspaces of $X^*$ of finite co-dimension and the other player picks normalized vectors in those subspaces. Detailed explanitions and references are given in \cite{Baudierandco} paragraph 3.2.

\end{rmk}

\subsection{Tree reformulation for duals of separable spaces}

Finally, let us explain how the weak$^*$ asymptotic structure properties introduced before can be translated in duals of separable spaces in terms of properties of weak$^*$ null normalized trees. We believe that the following results are well known but we give the proof for completeness.

\begin{lm}

Let $X$ be a separable Banach space. For all $n\geq 1$ and for all $n$-dimensional space $E$ with a normalized basis $(e_i)_{i=1}^n$, $E$ is in the $n^{\text{th}}$ weak$^*$ asymptotic structure of  $X^*$ up to a constant $C\geq 1$ if and only if for all $\varepsilon >0$, there is a weak$^*$-null normalized unrooted tree of height $n$ whose branches are all $C(1+\varepsilon)$-equivalent to $(e_i)_{i=1}^n$.

\end{lm}

\begin{proof}

Let us fix some $n\geq 1$ and some $n$-dimensional space $E$ with a normalized basis $(e_i)_{i=1}^n$.

First let us assume that $E$ is in the $n^{\text{th}}$ weak$^*$ asymptotic structure of $X^*$ up to a constant $C\geq 1$.  Since $X$ is separable, we can choose a dense sequence $(z_k)_{k\geq 1}$ in $S_X$. For all $k\geq 1$, let us define the weak$^*$ closed subspace of finite codimension $Z_k=\bigcap_{i=1}^k\ker z_i$ of $X^*$ where $z_i$ is identified with the corresponding element in $X^{**}$. Then we have the property that any normalized sequence $(z_k^*)_{k\geq 1}$ in $S_{X^*}$ such that $z_k^*\in Z_k$ for all $k\geq 1$ is weak$^*$-null. Let us fix $\varepsilon>0$. Using the assumption on $E$, we will build a normalized unrooted tree $(z_s^*)_{s\in T_n\backslash\{\emptyset\}}$ in $S_{X^*}$ such that:
\begin{enumerate}

\item{ For all $s\in T_{n-1}$, for all $k\in \N$, $z_{s\smallfrown k}^*\in Z_k$. }

\item{ All the branches of $(z_s^*)_{s\in T_n\backslash\{\emptyset\}}$ are $C(1+\varepsilon)$-equivalent to $(e_i)_{i=1}^n$. }

\end{enumerate} This gives the desired tree by the choice of the subspaces $Z_k$. To do this construction, it is convenient to introduce a compatible linear ordering $(\sigma_i)_{i\geq 1}$ of $T_n\backslash\{\emptyset\}$. Then a straightforward induction allows us to get a sequence $(z_{\sigma_i}^*)_{i\geq 1}$ in $S_{X^*}$ such that:
\begin{enumerate}

\item{ If $\sigma_i=\sigma_i^-\smallfrown k$ for some $k\in \N$ then $z_{\sigma_i}\in Z_k$. }

\item{ If $l=\abs{\sigma_i}=n$, then $(z_\sigma^*,\ \sigma\leq \sigma_i)$ is $C(1+\varepsilon)$-equivalent to $(e_i)_{i=1}^n$.  }

\item{ If $l=\abs{\sigma_i}<n$ it satisfies:  \begin{align*}
\forall X_{l+1}\in \wcof{&X^*},\ \exists x^*_{l+1}\in  S_{X_{l+1}},\ \dots,\ \forall X_n\in\wcof{X^*},\ \exists x^*_n\in S_{X_n}: \\
(z_\sigma^*,\ \sigma&\leq \sigma_i)\smallfrown (x^*_i)_{i=l+1}^n \text{ is $C(1+\varepsilon)$-equivalent to $(e_i)_{i=1}^n$. }
\end{align*}  }

\end{enumerate}

Second, let us fix $\varepsilon>0$ and let us take $\delta>0$ such that: if $(v_i)_{i=1}^n$ and $(w_i)_{i=1}^n$ are normalized sequences in $X^*$, if $(v_i)_{i=1}^n$ is $C(1+\delta)$-equivalent to $(e_i)_{i=1}^n$ and if $\norme{v_i-w_i}\leq \delta$ for all $i\leq n$, then $(w_i)_{i=1}^n$ is $C(1+\varepsilon)$-equivalent to $(e_i)_{i=1}^n$. Then let us assume that there is a weak$^*$-null normalized unrooted tree $(z_s^*)_{s\in T_n\backslash\{\emptyset\}}$ in $S_{X^*}$ whose branches are all $C(1+\delta)$-equivalent to $(e_i)_{i=1}^n$. Let us recall the following well known property: if $Z$ is a weak$^*$ closed subspace of finite codimension of $X^*$ and if $(z_k^*)_{k\geq 1}$ is normalized and weak$^*$-null, then the distance $\dist(z_k^*,S_Z)$ goes to $0$. Using this, a straightforward induction shows that the following property holds for all $1\leq j\leq n$: \begin{align*}
 \forall X_1\in\cof{X^*},\ \exists x^*_{1}\in  S_{X_1},\ &\dots,\ \forall X_j\in\cof{X^*},\ \exists x^*_j\in S_{X_j},\ \exists s\in T_n,\ \abs{s}=j: \\
 &\forall 1\leq i\leq j,\ \norme{x_i^*-z_{s_{\lvert i}}^*}\leq \delta 
\end{align*} where the $X_i$ are weak$^*$ closed subspaces of finite co-dimension of $Y$. By our choice of $\delta$ and by the properties of the branches of our tree, we get the desired result.

\end{proof}

Let us recall two famous results in Banach space theory.

\begin{thm}[James, non-distordability of $c_0$] 

For all $m\geq 1$, $C\geq 1$ and $\varepsilon>0$, there is an $n\geq 1$ such that: any basic sequence $(e_i)_{i=1}^n$ of length $n$ which is $C$-equivalent to the unit vector basis of $\ell_\infty^n$ admits a block basis of length $m$ which is $(1+\varepsilon)$-equivalent to the unit vector basis of $\ell_\infty^m$.

\end{thm}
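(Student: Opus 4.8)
The plan is to exploit the fact that $C$-equivalence with the $\ell_\infty^n$-basis is a very rigid constraint and to beat the distortion constant down to $1+\varepsilon$ by passing to blocks at a suitable scale. First I would normalize so that $\max_i|a_i|\le \|\sum_i a_i e_i\|\le C\max_i|a_i|$ for all scalars $(a_i)$; in this form two facts come for free. On the one hand the upper $\ell_\infty$ estimate is automatic for any successive normalized blocks $u_1<\dots<u_k$: writing $\sum_j b_j u_j=\sum_i c_i e_i$ one has $\max_i|c_i|\le \max_j|b_j|$, hence $\|\sum_j b_j u_j\|\le C\max_j|b_j|$. On the other hand, since the $\ell_\infty$-basis is $1$-unconditional, $(e_i)_{i=1}^n$ is $C$-unconditional. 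Thus the entire content of the statement is to improve these free constants from $C$ to $1+\varepsilon$, and the difficulty concentrates in the mixed-sign upper estimate.

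The heart of the argument is a scale-stabilization lemma. For an interval $F\subseteq\{1,\dots,n\}$ write $N(F)=\|\sum_{i\in F}e_i\|\in[1,C]$, and set $a(\ell)=\sup\{N(F):\ |F|\le \ell\}$. The function $a$ is nondecreasing and bounded by $C$, so along a geometric progression of scales it can increase by a factor $1+\varepsilon$ at most $O(\log C/\log(1+\varepsilon))$ times; choosing the first scale $\ell_0$ at which $a(2m\ell_0)\le(1+\varepsilon)\,a(\ell_0)=:(1+\varepsilon)\lambda$ fixes the scale, and this counting is exactly what dictates how large $n$ must be taken (a tower/exponential bound in $C$, $m$ and $1/\varepsilon$). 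I would then produce $m$ disjoint successive blocks $v_1<\dots<v_m$, each an interval of length $\ell_0$ whose norm is within a factor $1+\varepsilon$ of $\lambda$: such near-maximizing windows exist by the definition of $a(\ell_0)$, and enough disjoint ones can be extracted because there are $\sim n/\ell_0$ candidate windows. The cleanest way to guarantee that all $m$ chosen windows are simultaneously near-maximal is to first run this argument for the infinite space $c_0$, where passing to a spreading model makes the norm of a fixed block shape translation-invariant in the limit, and then transfer to the finite statement by finite representability; this sidesteps the one genuinely finite-dimensional nuisance.

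Finally I would normalize $u_j=v_j/\|v_j\|$ and check that $(u_j)_{j=1}^m$ is $(1+\varepsilon)$-equivalent to the $\ell_\infty^m$-basis. The lower estimate is easy: $\|\sum_j b_j u_j\|$ dominates the contribution of a single near-maximal block, giving $\|\sum_j b_j u_j\|\ge(1-\varepsilon)\max_j|b_j|$ after adjusting $\varepsilon$. The upper estimate for arbitrary signs $\epsilon_j\in\{-1,1\}$ is the crux: here one uses that the blocks have length $\ell_0$, the stabilized scale, so that any interval nearly realizing the norm of $\sum_j\epsilon_j v_j$ is effectively confined to a single block on which the sign is constant; consequently the mixed-sign norm is no larger than $a$ evaluated at a controlled scale, i.e. within $(1+\varepsilon)\lambda$, rather than the naive $C\lambda$ coming from unconditionality. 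This localization at the stabilized scale — the mechanism that turns the free constant $C$ into $1+\varepsilon$ precisely for sign-varying combinations — is the main obstacle, and it is the essential point of James's theorem; once it is in place, the remaining estimates are routine pigeonholing and triangle inequalities. The required $m$-term block basis is then obtained provided $n$ was chosen past the stabilization threshold identified above.
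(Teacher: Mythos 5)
A preliminary remark: the paper offers no proof of this statement at all --- it is quoted as a classical theorem of James and used as a black box --- so your attempt can only be measured against the standard argument. Your skeleton (stabilize a bounded monotone quantity, then pick near-extremal blocks past the stabilization threshold) is indeed the right one, but the step you yourself identify as the crux is wrong, not merely unproved. The quantity you stabilize, $a(\ell)=\sup\{\|\sum_{i\in F}e_i\|:|F|\le\ell\}$, sees only constant-sign interval sums, and such a quantity cannot control $\|\sum_j\epsilon_jv_j\|$ for mixed signs; there is no ``localization to a single block''. Concretely, take on $\mathbb{R}^{2n}$ the norm $\|a\|=\max\bigl\{\max_i|a_i|,\ \max_j|a_{2j-1}-a_{2j}|\bigr\}$, which satisfies $\max_i|a_i|\le\|a\|\le2\max_i|a_i|$. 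Every nonempty interval $F$ has $\|\sum_{i\in F}e_i\|=1$, so $a(\ell)\equiv1$, the stabilization is vacuous with $\ell_0=1$ and $\lambda=1$, and every singleton window is exactly maximal; yet $\|e_1-e_2\|=2=2\lambda$, so your claimed mixed-sign upper bound $(1+\varepsilon)\lambda$ fails as badly as unconditionality allows. (Good blocks do exist here, e.g.\ $u_j=e_{2j-1}+e_{2j}$, but your windows do not find them.) The classical fix is to stabilize a quantity that already contains all mixed-sign competitors, namely $\mu_\ell=\sup\{\|\sum_{i>\ell}a_ie_i\|:|a_i|\le1\}$, the supremum over \emph{all} coefficient vectors in the cube supported in a tail: it is nonincreasing and valued in $[1,C]$, so a pigeonhole over $\lceil(C-1)/\delta\rceil\, m$ consecutive windows produces $m$ windows across which it drops by less than $\delta$, and then any $\sum_jb_ju_j$ with $\max_j|b_j|\le1$ and $u_j$ chosen in those windows is itself an admissible competitor for $\mu_{\ell_0}$ --- the upper bound needs no localization.

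The lower bound is the second genuine gap: it is not ``easy'', and the norm of $\sum_jb_ju_j$ does not dominate the contribution of one block --- projecting onto a single block costs the basis constant, which here is of order $C$, not $1+\varepsilon$. The standard remedy is the symmetrization identity $2b_{j_0}u_{j_0}=\sum_jb_ju_j+\bigl(b_{j_0}u_{j_0}-\sum_{j\ne j_0}b_ju_j\bigr)$ with $|b_{j_0}|=\max_j|b_j|=:\beta$: both summands on the right have coefficients in $[-\beta,\beta]$ and live in the stabilized region, hence have norm at most $\beta(\lambda+\delta)$, so $\|\sum_jb_ju_j\|\ge2\beta\|u_{j_0}\|-\beta(\lambda+\delta)\ge\beta(\lambda-3\delta)$ once each $u_j$ is chosen with $\|u_j\|>\lambda-\delta$; since $\lambda\ge1$, normalizing the $u_j$ yields $(1+O(\delta))$-equivalence. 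Finally, the proposed detour through spreading models and ``finite representability'' runs in the wrong direction: finite representability transfers finite-dimensional structure into a space, it does not deduce a uniform statement about all length-$n$ bases from the infinite-dimensional theorem, and a spreading model is a limit object whose good blocks do not automatically pull back to the original sequence. With the two corrections above the argument closes directly in the finite setting, and the pigeonhole on $\mu_\ell$ is what produces the explicit bound on $n$.
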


\begin{thm}[Krivine/Rosenthal, non-distordability of $\ell_p$] 

Let $1\leq p <\infty$. For all $m\geq 1$, $C\geq 1$ and $\varepsilon>0$, there is an $n\geq 1$ such that: any basic sequence $(e_i)_{i=1}^n$ of length $n$ which is $C$-equivalent to the unit vector basis of $\ell_p^n$ admits a block basis of length $m$ which is $(1+\varepsilon)$-equivalent to the unit vector basis of $\ell_p^m$.

\end{thm}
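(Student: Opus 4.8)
The plan is to deduce this finite statement from its infinite counterpart, Krivine's theorem, by a compactness argument. Fix $m\geq 2$ (the case $m=1$ being trivial), $C\geq 1$ and $\varepsilon>0$, and suppose towards a contradiction that the conclusion fails for every $n$. Then for each $n\geq 1$ I may select a basic sequence $(e_i^{(n)})_{i=1}^n$ which is $C$-equivalent to the unit vector basis of $\ell_p^n$ but admits no block basis of length $m$ that is $(1+\varepsilon)$-equivalent to the unit vector basis of $\ell_p^m$. After normalizing each sequence (which only enlarges $C$, harmless since $C$ is an arbitrary given constant, and does not affect the $\ell_p^m$-equivalence class of any block basis) I may assume $\norme{e_i^{(n)}}=1$ for all $i,n$.

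First I would manufacture an infinite sequence carrying the same obstruction. Fixing a free ultrafilter $\mathcal U$ on $\N$, for every finitely supported scalar sequence $a=(a_i)$, say with support in $\{1,\dots,K\}$, I set $N(a)=\lim_{\mathcal U}N_n(a)$ where $N_n(a)=\norme{\sum_i a_ie_i^{(n)}}$ for $n\geq K$. The limit exists and lies in $[\tfrac1C\norme{a}_p,\,C\norme{a}_p]$, so $N$ is a norm on $c_{00}$ that is $C$-equivalent to the $\ell_p$-norm. Completing, I obtain a normalized basic sequence $(f_i)$ in a Banach space $E$ which is $C$-equivalent to the unit vector basis of $\ell_p$. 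The key transfer observation is that the negative property survives to $(f_i)$: if $(f_i)$ had a block basis $g_1,\dots,g_m$ supported in $\{1,\dots,K\}$ and $(1+\delta)$-equivalent to the $\ell_p^m$-basis, then since this equivalence is tested by the continuous map $b\mapsto N(\sum_j b_jg_j)$ on the compact sphere of $\ell_p^m$ and $N=\lim_{\mathcal U}N_n$, a finite-net argument shows that for $\mathcal U$-many $n$ the block basis with the \emph{same} coefficients computed in $(e_i^{(n)})$ is $(1+\delta')$-equivalent to the $\ell_p^m$-basis, with $\delta'$ only slightly larger than $\delta$.

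Next I would invoke Krivine's theorem for the normalized basic sequence $(f_i)$: there is some $q\in[1,\infty]$ such that for every $\ell\geq 1$ and every $\eta>0$, $(f_i)$ admits a block basis of length $\ell$ that is $(1+\eta)$-equivalent to the unit vector basis of $\ell_q^\ell$. I then have to identify $q$ with $p$, which is a weight computation. If $g_1,\dots,g_\ell$ are disjointly supported normalized blocks $(1+\eta)$-equivalent to $\ell_q^\ell$, then on one hand $\norme{\sum_j g_j}$ lies in $[(1+\eta)^{-1}\ell^{1/q},(1+\eta)\ell^{1/q}]$, while on the other hand, writing $g_j=\sum_{i\in I_j}a_if_i$ and using the $C$-equivalence of $(f_i)$ to $\ell_p$ together with $\norme{(a_i)_{i\in I_j}}_p\in[\tfrac1C,C]$ (forced by $\norme{g_j}=1$), one gets $\norme{\sum_jg_j}\in[\tfrac1{C^2}\ell^{1/p},C^2\ell^{1/p}]$. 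Comparing the two estimates and letting $\ell\to\infty$ forces $1/q=1/p$, hence $q=p$; in particular $q\neq\infty$, consistent with $p<\infty$.

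Finally I would combine the pieces. Choosing $\eta$ small enough that the transferred constant $\delta'$ stays below $\varepsilon$, Krivine's theorem (with $q=p$ and $\ell=m$) furnishes a length-$m$ block basis of $(f_i)$ that is $(1+\eta)$-equivalent to the $\ell_p^m$-basis, and the transfer observation then produces, for some $n$, a length-$m$ block basis of $(e_i^{(n)})$ that is $(1+\varepsilon)$-equivalent to the $\ell_p^m$-basis, contradicting the choice of the bad sequences. I expect the main obstacle to be the careful bookkeeping in the transfer step — matching the almost-isometric constants between the ultralimit norm $N$ and the finite norms $N_n$ uniformly over the relevant compact set of coefficients — together with the appeal to Krivine's theorem itself, which is the genuinely hard analytic ingredient and which I would simply cite. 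An alternative route avoiding Krivine would build a $1$-subsymmetric spreading model of $(f_i)$ and analyse its fundamental function $\theta(k)=\norme{\sum_{j=1}^k g_j}$, showing via sub/superadditivity and the $\ell_p$-bounds that it is regularly varying of index $1/p$ and can be stabilized to $k^{1/p}$ on further blocks; but this essentially reproves Krivine and is no shorter.
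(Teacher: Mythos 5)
The paper does not prove this statement at all: it records it as a classical black-box result (attributed to Krivine and Rosenthal) and only uses it, in tandem with James's theorem for $c_0$, inside the subsequent lemma on optimizing asymptotic-structure constants. So there is no in-paper proof to compare against, and your proposal should be judged on its own terms. On those terms it is sound, and it is the standard way to extract the finite quantitative statement from the infinite theorem: the ultraproduct of the putative bad finite sequences yields a normalized basic sequence $(f_i)$ uniformly equivalent to the $\ell_p$ basis; the Krivine set of $(f_i)$ is pinned down to $\{p\}$ by your growth comparison $\ell^{1/q}\asymp\ell^{1/p}$ (which correctly rules out $q=\infty$, consistent with the hypothesis $p<\infty$); and the finite-net transfer of a length-$m$ almost-$\ell_p^m$ block back to $\mathcal U$-many of the $(e_i^{(n)})$ is legitimate because the equivalence is tested on the compact sphere of $\ell_p^m$ and the blocks have fixed finite support, so the relevant set of indices $n$ is the intersection of two $\mathcal U$-large sets. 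Two small points to keep in mind when writing this up: you do need the Rosenthal--Lemberg form of Krivine's theorem (blocks of the given sequence itself, not merely of a spreading model), which matches the paper's attribution; and the transferred blocks in $(e_i^{(n)})$ are only approximately normalized, so either use the ratio convention for $(1+\delta')$-equivalence or renormalize at a cost absorbed into $\delta'$. Importantly, your argument does not conflict with the Odell--Schlumprecht distortability of $\ell_p$ for $1<p<\infty$, since you only ever invoke finite block bases. The genuinely hard ingredient remains Krivine's theorem, which both you and the paper cite rather than prove.
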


We will use these results and the preceding lemma to optimize constants in the asymptotic structure.

\begin{lm}

Let $X$ be a separable Banach space and let $1\leq p \leq \infty$. If there is a constant $C\geq 1$ such that the space $\ell_p^n$ is in the $n^{\text{th}}$ weak$^*$ asymptotic structure of $X^*$ up to $C$ for all $n\geq 1$, then $\ell_p^n\in\{X^*\}^{w^*}_n$ for all $n\geq 1$.

\end{lm}

\begin{proof}

Let us assume that there is a constant $C>0$ such that for every $n\geq 1$, $\ell_p^n$ is in the $n^{\text{th}}$ weak$^*$ asymptotic structure of $X^*$ up to $C$. First we will use a stabilisation argument to show that for every $n\geq 1$ we can find an $n$-dimensional normed space $E_n$ which is $2C$ equivalent to $\ell_p^n$ and belongs to $\{X^*\}_n^{w^*}$.

Let us fix $n\geq 1$ and  $\varepsilon \in (0,1)$. By compactness of the Banach-Mazur distance on the set of $n$-dimensional normed spaces we can find a finite collection $\mathcal{E}_\varepsilon$ of such spaces such that every $n$-dimensional normed space which is $2C$ equivalent to $\ell_p^n$ is actually $(1+\varepsilon)$ equivalent to some element of $\mathcal{E}_\varepsilon$. Since $X$ is separable we can find by the preceding lemma a weak$^*$ null normalized unrooted tree $(x_s^*)_{s\in T_n\backslash\{\emptyset\}}$ in $X^*$ whose branches are all $2C$ equivalent to the canonical basis of $\ell_p^n$. Thus for every branch $\beta$ of the tree there is an element $E_\beta\in \mathcal{E}_\varepsilon$ which is $(1+\varepsilon)$ equivalent to the span of $\beta$. By Ramsey's theorem we may assume up to the extraction of a full subtree that there is an element $E_\varepsilon\in \mathcal{E}_\varepsilon$ such that the span of every branch $\beta$ of the tree is $(1+\varepsilon)$ equivalent to $E_\varepsilon$. Finally letting $\varepsilon$ tends to $0$ and using sequential compactness we can find a sequence $(\varepsilon_i)_{i\geq 1}$ in $(0,1)$ converging to $0$ and an $n$-dimensional normed space $E$ such that the sequence $(E_{\varepsilon_i})_{i\geq 1}$ converges to $E$ in Banach-Mazur distance. It is straightforward to check that $E_n=E$ has the two desired properties.

Second we apply the non-distordability results to get the desired conclusion. Fix $m\geq 1$ and $\varepsilon>0$. Then we can find an $n\geq 1$ such that every normalized basic sequence of length $n$ which is $2C$ equivalent to the canonical basis of $\ell_p^n$ admits a normalized block basis of length $m$ which is $(1+\varepsilon)$ equivalent to the canonical basis of $\ell_p^m$. Now use the first step to find an $n$-dimensional normed space $E_n$ with a normalized basis $(e_i)_{i= 1}^n$ which is $2C$ equivalent to $\ell_p^n$ and belongs to $\{X^*\}_n^{w^*}$ and use the preceding statement to pick a normalized block basis $(f_j)_{j=1}^m$ of  $(e_i)_{i= 1}^n$ which is $(1+\varepsilon)$ equivalent to the canonical basis of $\ell_p^m$. Decomposing the $f_j$'s in the original basis and following the idea that one can play the game with the $e_i$'s taking successively the same subspace one can use a perturbation argument in order to check that the following property holds: \begin{align*}
\forall X_1\in \cof{&X^*},\  \exists x^*_1\in  S_{X_1},\  \dots,\ \forall X_n\in \cof{X^*},\ \exists x^*_m\in S_{X_n}: \\
&\text{ $(x^*_i)_{i=1}^m$ is $(1+\varepsilon)$-equivalent to $(f_i)_{i=1}^m$. }
\end{align*} As a consequence all the sequences $(x^*_i)_{i=1}^m$ are $(1+\varepsilon)^2$ equivalent to the canonical basis of $\ell_p^m$ and the conclusion follows by letting $\varepsilon$ go to $0$.

\end{proof}

Let $X$ be a Banach space and let $C\geq 1$. We say that $X^*$ has the $C$-weak$^*$ unconditional finite tree property if any weak$^*$-null normalized unrooted tree of finite height in $X^*$ has a branch which is $C$-unconditional. Let us point out that in this case an application of Ramsey's combinatorial theorem yields that any  weak$^*$-null normalized unrooted tree of finite height in $X^*$ has a full subtree whose branches are all $C$-unconditional. This property can be related to weak$^*$ asymptotic unconditionally for duals of separable spaces.

\begin{lm}

Let $X$ be a separable Banach space. Then $X^*$ has a weak$^*$ unconditional asymptotic structure if and only if it has the weak$^*$ unconditional finite tree property.

\end{lm}

\begin{proof}

Clearly one can show using the same tools as in the proof of the preceding lemma that:\begin{enumerate}
\item{ If $X^*$ does not have a $C$-weak$^*$ unconditional asymptotic structure then there is a weak$^*$-null normalized unrooted tree of height $n$ in $X^*$ with no $C$-unconditional branch. }
\item{ If $X^*$ has a $C$-weak$^*$ unconditional asymptotic structure then for all $\varepsilon>0$, and for all weak$^*$-null normalized unrooted tree of height $n$ in $S_{X^*}$ there is a branch which is $(C+\varepsilon)$-unconditional. }

\end{enumerate}

\end{proof}

\section{Szlenk index and asymptotic structure}

In this section we introduce a key object for our study, the Szlenk index of a Banach space, and we investigate how this object affects the asymptotic structure of Banach spaces and of their duals under some separability assumptions and unconditionality conditions.

\subsection{Szlenk index and bi-orthogonal systems indexed by trees}

Let $X$ be a Banach space, let $K$ be a weak$^*$-compact subset of the dual space $X^*$ and fix $\varepsilon>0$.  Denote by $\mathcal{V}$ the set of all weak$^*$-open subsets $V$ of $K$ satisfying $\diam\ V \leq \varepsilon$ and let $s_\varepsilon(K)$ be the weak$^*$ closed subset defined by $s_\varepsilon(K)=K\backslash\left(\bigcup_{V\in \mathcal{V}}V\right)$. We define inductively derivative subsets $s_\varepsilon^\alpha(K)$ of  $K$ for every ordinal $\alpha$ by $s_\varepsilon^1(K)=s_\varepsilon(K)$, $s_\varepsilon^{\alpha+1}(K)=s_\varepsilon(s_\varepsilon^{\alpha}(K))$ if $\alpha\geq 1$ and $s_\varepsilon^{\alpha}=\bigcap_{\beta<\alpha}s_\varepsilon^{\beta}$ if $\alpha$ is a limit ordinal. Then we consider $S_Z(K,\varepsilon)=\inf\edtq{\alpha}{s_\varepsilon^{\alpha}(K)=\emptyset}$ if such an $\alpha$ exists and $S_Z(K,\varepsilon)=\infty$ otherwise.  Finally let $S_Z(K)=\sup_{\varepsilon>0} S_Z(K,\varepsilon)$. The Szlenk index of $X$ is $S_Z(X)=S_Z\left(B_{X^*}\right)$ where $B_{X^*}$ denotes the closed unit ball of $X^*$. We refer to the survey \cite{surveySzlenk} for an extensive study of the properties  and applications of the Slzenk index.

In the separable case it is well known that the property $S_Z(X)>\omega$ can be characterized by the existence in the unit ball of the dual space $X^*$ of arbitrarily high separated weak$^*$ null trees. This is the following lemma. 

\begin{lm}\label{Szlenktrees}

Let $X$ be a separable space. Then $S_Z(X)> \omega$ if and only if $X^*$ has the following property: there is an $\varepsilon>0$ such that for every integer $n\geq 1$ there is a weak$^*$-null tree $(x_s^*)_{s\in T_n}$ in $X^*$ with $x_\emptyset^*\in B_{X^*}$ and $\norme{x_s^*}\geq \varepsilon$ for all $s\in T_n$ non-empty, such that for every sequence $s\in T_n$ of maximal length $x_\emptyset^*+ \sum_{i=1}^n x_{s_{\lvert i}}^*$ is in $B_{X^*}$.

\end{lm}

In \cite{BKL}, the authors used the preceding lemma to prove the following proposition which we will exploit later.

\begin{prp}[\cite{BKL}, prop 2.2.]\label{propBKL}

Let $X$ be a separable Banach space with $S_Z(X)>\omega$. For all $n\geq 1$ and for all $\delta>0$, there exist a weak$^*$-null bounded tree $(x_s^*)_{s\in T_n}$ in  $X^*$ and a normalized tree $(x_s)_{s\in T_n}$ in $X$ such that:
\begin{enumerate}

\item{ For all $s\in T_n\backslash\{\emptyset\}$, $\norme{x_s^*}\geq 1$ and $\norme{\sum_{t\leq s}x_t^*}\leq 3$. }

\item{ For all $s\in T_n$, $x_s^*(x_s)\geq \frac{1}{3}\norme{x_s^*}$. }

\item{ For all $s\neq t$ in $T_n$, $\abs{x_s^*(x_t)}\leq \delta$. }

\end{enumerate}

\end{prp}

The proof of this result rests on the following classical result from Mazur.

\begin{lm}[Mazur]

Let $(x_k^*)_{k\geq 1}$ be a weak$^*$-null sequence in $X^*$ such that $\norme{x_k^*}\geq 1$ for all $k\geq 1$, and let $F$ be a finite subset of $X^*$. Then there is a sequence $(x_k)_{k\geq 1}$ in $S_X$ such that $x_k\in \bigcap_{y^*\in F}\ker y^*$ for all $k\geq 1$ and $\liminf x^*_k(x_k)\geq \frac{1}{2}$.

\end{lm}

This proposition can be slightly improved when we assume that the dual space is separable: one can ask the normalized trees in $S_X$ to be weakly-null. 

\begin{prp}\label{propBKL+}

Let $X$ be a Banach space with separable dual and with $S_Z(X)>\omega$. For all $n\geq 1$ and for all $\delta$, there exist a weak$^*$-null bounded tree $(x_s^*)_{s\in T_n}$ in  $X^*$ and a normalized weak-null tree $(x_s)_{s\in T_n}$ in $X$ such that:
\begin{enumerate}

\item{ For all $s\in T_n\backslash\{\emptyset\}$, $\norme{x_s^*}\geq 1$ and $\norme{\sum_{t\leq s}x_t^*}\leq 3$. }

\item{ For all $s\in T_n$, $x_s^*(x_s)\geq \frac{1}{3}\norme{x_s^*}$. }

\item{ For all $s\neq t$ in $T_n$, $\abs{x_s^*(x_t)}\leq \delta$. }

\end{enumerate}

\end{prp}

\begin{proof}

Let us pick a dense sequence $(y_k^*)_{n\geq 1}$ in $S_{X^*}$ and let us define $Y_k=\bigcap_{i=1}^k \ker y_i^*$. Then any normalized sequence $(x_k)_{k\geq 1}$ in $X$ such that $x_k\in Y_k$ for all $k\geq 1$ is weakly-null.  Now let us fix $n\geq 1$ and $\delta>0$. Since $S_Z(X)>\omega$ and $X$ is separable we can find a weak$^*$-null bounded tree $(x_s^*)_{s\in T_n}$ in $X^*$ such that for all $s\in T_n\backslash\{\emptyset\}$, $\norme{x_s^*}\geq 1$ and $\norme{\sum_{t\leq s}x_t^*}\leq 3$.  Let us pick a compatible linear ordering $(\sigma_i)_{i\geq 1}$ of $T_N$ and let us assume as we may that this ordering also satisfies the following property: if $\sigma_i=\sigma_{i_0}\smallfrown m$ and $\sigma_j=\sigma_{i_0}\smallfrown m'$ with $m<m'$ then $i<j$.  By induction, we can build a sequence $(\theta_i)_{i\geq 1}$ in $T_n$ and a sequence $(x_{\theta_i})_{i\geq 1}$ in $S_X$ such that:
 \begin{enumerate}
 
 \item{ $\theta_1=\emptyset$ }
 
 \item{ If $\sigma_i^-=\sigma_{i_0}$ then $\theta_i=\theta_{i_0}\smallfrown m'$ for some $m'\in\N$ bigger than $\max\{l:\ \exists j<i:\ \theta_j=\theta_{i_0}\smallfrown l\}$ and $x_{\theta_i}\in Y_m$. }
 
 \item{ For all $i<j$, $\abs{x_{\theta_j}^*(x_{\theta_i})}\leq \delta$ and $x_{\theta_i}^*(x_{\theta_j})=0$. } 
 
 \item{ For all $i\geq 1$, $x_{\theta_i}^*(x_{\theta_i})\geq \frac{1}{3}\norme{x_{\theta_i}^*}$. }
 
 \end{enumerate} For $i=1$, just let $\theta_1=\emptyset$ and pick any $x_{\emptyset}$ in $S_X$ such that $x_{\emptyset}^*(x_{\emptyset})\geq \frac{1}{3}\norme{x_{\emptyset}^*}$. Now let us assume that $\theta_1,\dots \theta_i$ and $x_{\theta_1},\dots, x_{\theta_i}$ have been chosen with the required properties. Since we are working with a compatible linear ordering, there is some $i_0\leq i$ and such that $\sigma_{i+1}^-=\sigma_{i_0}$. Since $(x_{\theta_{i_0}\smallfrown k}^*)_{k\geq 1}$ is weak$^*$-null there is a $K\in \N$  such that $\abs{x_{\theta_{i_0}\smallfrown k}^*(x_{\theta_j})}\leq \delta$ for all $j\leq i$ and for all $k\geq K$. Moreover, we can apply Mazur's lemma to this sequence with $F=\{y_1^*,\dots, y_m^*, x_{\theta_1}^*,\dots, x_{\theta_i}^*\}$ and take any $m'$ big enough in order to get a element $x_{\theta_{i_0}\smallfrown m'}\in S_{Y_m}$ such that $x_{\theta_{i_0}\smallfrown m'}^*(x_{\theta_{i_0}\smallfrown m'})\geq \frac{1}{3}\norme{x_{\theta_{i_0}\smallfrown m'}^*}$ and $x_j^*(x_{\theta_{i_0}\smallfrown m'})=0$ for all $j\leq i$ and also $m'$ bigger than $K$ and bigger than $\max\{l:\ \exists j<i:\ \theta_j=\theta_{i_0}\smallfrown l\}$. We conclude by putting $\theta_{i+1}=\theta_{i_0}\smallfrown m'$. Then $T=\{\theta_i,\ i\geq 1\}$ defines a full subtree of $T_n$ and for every node $(x_{\theta_{i_0}\smallfrown m'_k})_{k\geq 1}$ of $T$ there is a strictly increasing sequence $(m_k)_{k\geq 1}$ in $\N$ such that $x_{\theta_{i_0}\smallfrown m'_k}\in S_{Y_{m_k}}$. Consequently, $(x_s)_{s\in T}$ is a weakly-null tree in $X$ and satisfies all the required properties together with $(x_s^*)_{s\in T}$.

\end{proof}

\subsection{Weak$^*$ asymptotic structure and Szlenk index }

Using the almost bi-orthogonal system from \cite{BKL}, one can prove the following result, which is an extension of the reflexive version proved in \cite{Baudierandco}.

\begin{thm}

Let $X^*$ be the dual of a separable Banach space $X$ with $S_Z(X)>\omega$, and assume that $X^*$ has a weak$^*$ unconditional asymptotic structure. Then $\ell_\infty^n$ is in the $n^{\text{th}}$ weak$^*$ asymptotic structure of $X^*$ for every $n\geq 1$.

\end{thm}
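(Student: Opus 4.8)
The plan is to feed the almost bi-orthogonal trees of Proposition~\ref{propBKL} into the weak$^*$ unconditional finite tree property, and then to let the two reduction lemmas of Subsection~1.3 absorb the constants. By the lemma optimizing constants (which invokes James' non-distortion of $c_0$) it is enough to exhibit one constant $C\geq 1$, \emph{independent of $n$}, for which $\ell_\infty^n$ belongs to the $n^{\text{th}}$ weak$^*$ asymptotic structure of $X^*$ up to $C$ for every $n\geq 1$. By the tree reformulation lemma this reduces further to a single task: for each fixed $n$, build a weak$^*$-null normalized unrooted tree of height $n$ in $X^*$ whose branches are all $C$-equivalent to the unit vector basis of $\ell_\infty^n$, with $C$ not depending on $n$. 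All the work is concentrated in this tree construction with a uniform constant.

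First I would fix $n$, apply Proposition~\ref{propBKL} with a small parameter $\delta=\delta(n)>0$ to be chosen, and obtain a weak$^*$-null bounded tree $(x_s^*)_{s\in T_n}$ together with a normalized tree $(x_s)_{s\in T_n}$ satisfying (1)--(3). Since the tree is bounded and property (1) keeps $\norme{x_s^*}\geq 1$, all the norms $\norme{x_s^*}$ lie in a fixed range $[1,M]$; hence $y_s^*:=x_s^*/\norme{x_s^*}$ is again a weak$^*$-null normalized unrooted tree, weak$^*$-nullity of the nodes being preserved because the normalizing factors stay bounded below by $1$. As $X^*$ has a weak$^*$ unconditional asymptotic structure it has the weak$^*$ unconditional finite tree property, so by the Ramsey argument recalled in Subsection~1.3 I may pass to a full subtree on which \emph{every} branch of $(y_s^*)$ is $C_0$-unconditional, $C_0$ being the unconditionality constant. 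Restricting all the data to this full subtree preserves the conclusions of Proposition~\ref{propBKL} (each is a statement about a single branch or a pair of indices) together with weak$^*$-nullity, and the $C_0$-unconditionality passes verbatim to the unnormalized branches $(x_{s_{\lvert i}}^*)_{i=1}^n$ since it only concerns sign changes.

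It then remains to prove, on this subtree, the two-sided $\ell_\infty^n$ estimate with a constant independent of $n$. For the lower bound I would test $\sum_i a_ix_{s_{\lvert i}}^*$ against the witness $x_{s_{\lvert j}}$ for an index $j$ with $\abs{a_j}=\max_i\abs{a_i}$: property (2) produces a diagonal term of size at least $\frac13\abs{a_j}$, while property (3) bounds the off-diagonal terms by $(n-1)\delta\max_i\abs{a_i}$, so the choice $\delta<\frac{1}{6n}$ gives $\norme{\sum_i a_ix_{s_{\lvert i}}^*}\geq\frac16\max_i\abs{a_i}$. For the upper bound the decisive input is that property (1) bounds the branch sum $\norme{\sum_{i=1}^n x_{s_{\lvert i}}^*}$ by a constant independent of $n$; combining this with $C_0$-unconditionality controls \emph{all} combinations at once, since $\norme{\sum_i a_ix_{s_{\lvert i}}^*}\leq C_0\,\norme{\sum_i x_{s_{\lvert i}}^*}\max_i\abs{a_i}$. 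Undoing the normalization costs only the fixed factor $M$, so the normalized branches are $C$-equivalent to the $\ell_\infty^n$ basis for a fixed $C$, which is exactly what the tree reformulation and constant-optimization lemmas need in order to conclude $\ell_\infty^n\in\{X^*\}_n^{w^*}$ for all $n$. I expect the upper estimate to be the real obstacle: a bounded branch sum alone only controls the single all-plus sign pattern, and it is precisely the weak$^*$ unconditional asymptotic structure that upgrades this to control of every unconditional combination, and hence to a genuine $\ell_\infty^n$ upper bound whose constant does not deteriorate as $n\to\infty$.
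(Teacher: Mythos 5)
Your argument is correct and follows essentially the same route as the paper: reduce via the tree reformulation and constant-optimization lemmas (James' non-distortion of $c_0$) to producing, for each $n$, a weak$^*$-null normalized tree whose branches are uniformly equivalent to the $\ell_\infty^n$ basis; obtain it from the almost bi-orthogonal trees of Proposition~\ref{propBKL} after extracting a $C$-unconditional full subtree; and establish the same lower bound (testing against $x_{s\lvert j}$, with $\delta<\tfrac{1}{6n}$) and the same upper bound (bounded branch sum plus unconditionality). The only step the paper makes explicit that you leave implicit is the convexity argument upgrading control of sign patterns to control of all coefficient vectors in $[-1,1]^n$, which is what justifies your displayed inequality $\norme{\sum_i a_ix^*_{s\lvert i}}\leq C_0\norme{\sum_i x^*_{s\lvert i}}\max_i\abs{a_i}$.
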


\begin{proof}

By the results of the first section there is a constant $C\geq 1$ such that any weak$^*$-null normalized tree of finite height in $X^*$ has a full subtree whose branches are all $C$ unconditional. Moreover it is sufficient to show that there is a constant $D\geq 1$ such that for all $n\geq 1$, there is a weak$^*$ null normalized tree of height $n$ in $Y$ whose branches are all $D$-equivalent to the unit vector basis of $\ell_\infty^n$.  Let us fix some $n\geq 1$ and let us fix $\delta>0$ to be chosen later. By proposition \ref{propBKL}, there exist a weak$^*$-null bounded tree $(x_s^*)_{s\in T_n}$ in  $X^*$ and a normalized tree $(x_s)_{s\in T_n}$ in $X$ such that:
\begin{enumerate}

\item{ For all $s\in T_n\backslash\{\emptyset\}$, $\norme{x_s^*}\geq 1$ and $\norme{\sum_{t\leq s}x_t^*}\leq 3$. }

\item{ For all $s\in T_n$, $x_s^*(x_s)\geq \frac{1}{3}\norme{x_s^*}$. }

\item{ For all $s\neq t$ in $T_n$, $\abs{x_s^*(x_t)}\leq \delta$. }

\end{enumerate} As mentioned one can assume, up to passing to some full subtree, that all banches of $(x_s^*)_{s\in T_n}$ are $C$-unconditional. For all $s\in T_n$, let $y_s=\frac{x_s^*}{\norme{x_s^*}}$. Since the function $(a_0,\dots, a_{\abs{s}})\in[-1,\ 1]^{\abs{s}} \mapsto  \norme{\sum_{t\leq s}a_tx_t^*}$ is continuous and convex we have:  \begin{align*}
\max \left\{ \norme{\sum_{t\leq s}a_ty_t},\ a_t\in [-1,\ 1] \right\} &\leq \max \left\{ \norme{\sum_{t\leq s}a_tx_t^*},\ a_t\in [-1,\ 1] \right\} \\
&= \max \left\{ \norme{\sum_{t\leq s}a_tx_t^*},\ a_t\in \{-1,\ 1\} \right\} \\
&\leq C\norme{\sum_{t\leq s}x_t^*} \\
&\leq 3C.
\end{align*} Thus, for all $s\in T_n$ with $\abs{s}=n$ and for all $a_0,\dots, a_n\in \R$, we have: $$\norme{\sum_{t\leq s}a_{\abs{t}}y_t}\leq 3C\max_{0\leq i\leq n} \abs{a_i}.$$ Now if $j$ is such that $\abs{a_j}=\max_{0\leq i\leq n} \abs{a_i}$, we have, using the properties of the almost bi-orthogonal system: \begin{align*}
\norme{\sum_{t\leq s}a_{\abs{t}}y_t} &\geq \scal{\sign {a_j}x_{s_{\lvert j}}}{\sum_{t\leq s}a_{\abs{t}}y_t} \\
&\geq  \frac{1}{3}\abs{a_j} -\delta \sum_{i\neq j}\abs{a_i} \\
&\geq \left(\frac{1}{3}-n\delta\right) \max_{0\leq i\leq n} \abs{a_i}.
\end{align*}  Thus if $\delta$ was chosen say smaller than $\frac{1}{6n}$, the tree $(y_s)_{s\in T_n}$ satisfies the required conditions (with constant $D=18C$).

\end{proof}

To conclude this paragraph let us make a brief comment about the property $(*)$ mentioned in the introduction. Let us first recall the definition of this property: \begin{align*}  (*):\ \  & \exists\varepsilon>0:\  \forall n\geq 1,\ \exists (x_s^*)_{s\in T_n}\subset X^* \text{ weak$^*$-null}: \ x^*_\emptyset\in B_{X^*},\ \norme{x_s^*}\geq\varepsilon\ \forall s\neq \emptyset,\ \\
& \forall s\in T_n,\ \abs{s}=n,\ \forall \varepsilon_1,\dots,\varepsilon_n\in\{-1, 1\},\ x^*_\emptyset+\sum_{i=1}^n \varepsilon_ix^*_{s\lvert i} \in B_{X^*}. \end{align*} By the expression of the Szlenk index in terms of containment of trees, is is clear that if $X$ is a separable space, then $(*)$ $\implies$ $S_Z(X)>\omega$. Moreover one easily sees that the converse is true if one adds the condition that $X^*$ has a weak$^*$unconditional asymptotic structure. In fact it seems that property $(*)$ is the right property to make the preceding proof work so we have the following result.

\begin{thm}

Let $X$ be a separable Banach space and let us assume that $X^*$ has property $(*)$. Then $\ell_\infty^n$ is in the $n^{\text{th}}$ weak$^*$ asymptotic structure of $X^*$ for every $n\geq 1$.

\end{thm}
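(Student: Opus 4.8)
The plan is to follow the scheme of the preceding theorem, the crucial new point being that in the absence of a weak$^*$ unconditional asymptotic structure the sign condition built into property $(*)$ is exactly what furnishes the upper $\ell_\infty^n$ estimate. As in the proof above, it suffices to produce a single constant $D\geq 1$, independent of $n$, together with, for each $n$, a weak$^*$-null normalized unrooted tree of height $n$ in $X^*$ whose branches are all $D$-equivalent to the unit vector basis of $\ell_\infty^n$: by the tree-reformulation lemma of Section~1 this says that $\ell_\infty^n$ lies in the $n^{\text{th}}$ weak$^*$ asymptotic structure of $X^*$ up to $D$ for every $n$, and the non-distortion lemma (resting on James's theorem on the non-distortability of $c_0$) then upgrades this to $\ell_\infty^n\in\{X^*\}^{w^*}_n$.

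So fix $n$ and let $(x_s^*)_{s\in T_n}$ be the weak$^*$-null tree provided by $(*)$, with its universal constant $\varepsilon>0$. The upper estimate is immediate: for a branch $s$ of maximal length the map $(a_i)\mapsto\norme{x_\emptyset^*+\sum_{i=1}^n a_ix_{s_{\lvert i}}^*}$ is convex, so its maximum over $[-1,1]^n$ is attained at a vertex and is therefore $\leq 1$ by $(*)$; peeling off the root gives $\norme{\sum_{i=1}^n a_ix_{s_{\lvert i}}^*}\leq 2\max_i\abs{a_i}$ for all $a\in\R^n$, and in particular (taking a single coordinate) $\norme{x_{s_{\lvert i}}^*}\leq 2$ for every node. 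Next I would manufacture an almost bi-orthogonal system on this very tree. Applying Mazur's lemma to the normalized nodes $x_{t\smallfrown k}^*/\norme{x_{t\smallfrown k}^*}$ — which are still weak$^*$-null, the norms staying in $[\varepsilon,2]$ — exactly as in the induction proving Proposition~\ref{propBKL}, I would extract a full subtree and normalized vectors $(x_s)\subset S_X$ so that $x_s^*(x_s)\geq\frac12\norme{x_s^*}$ and $\abs{x_s^*(x_t)}\leq\delta$ for $s\neq t$, where $\delta>0$ is chosen at the outset. Writing $y_s=x_s^*/\norme{x_s^*}$, testing $\sum_i b_iy_{s_{\lvert i}}$ against $\eta\,x_{s_{\lvert j}}$, where $j$ realizes $\max_i\abs{b_i}$ and $\eta\in\{-1,1\}$ is the sign of $b_j$, and using $\norme{x_{s_{\lvert i}}^*}\geq\varepsilon$, yields the lower estimate $\norme{\sum_i b_iy_{s_{\lvert i}}}\geq\bigl(\frac12-\tfrac{(n-1)\delta}{\varepsilon}\bigr)\max_i\abs{b_i}$; choosing $\delta<\varepsilon/(4n)$ makes this $\geq\frac14\max_i\abs{b_i}$. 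Combined with the upper bound, the branches of the normalized weak$^*$-null tree $(y_s)_{s\neq\emptyset}$ are $D$-equivalent to the $\ell_\infty^n$ basis with $D=8/\varepsilon$, independent of $n$.

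The one point demanding care is that the sign condition and the bi-orthogonal structure must live on the \emph{same} tree: one cannot substitute the tree supplied by Proposition~\ref{propBKL}, since that tree only controls the unsigned sums $\norme{\sum_{t\leq s}x_t^*}$, and it was the unconditionality hypothesis that converted this into sign control in the previous theorem. Here the sign control comes for free from $(*)$ and must be preserved when the tree is thinned, which it is, because every maximal branch of a full subtree is a maximal branch of the original tree and hence still satisfies the inequality in $(*)$; likewise the root and the lower norm bound $\varepsilon$ are untouched. I expect this bookkeeping — keeping the full-subtree extraction of the Mazur induction compatible with the $(*)$-inequality while checking that the resulting constant $D$ does not depend on $n$ — to be the only genuinely delicate part, the remaining estimates being routine.
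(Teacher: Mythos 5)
Your proof is correct and follows exactly the route the paper intends: the paper gives no separate argument for this theorem, stating only that property $(*)$ is ``the right property to make the preceding proof work,'' and your writeup carries out that adaptation faithfully --- the convexity argument for the upper $\ell_\infty^n$ bound from the sign condition in $(*)$, the Mazur-type biorthogonalization run directly on the $(*)$-tree, and the observation that passing to a full subtree preserves the $(*)$-inequality, followed by the non-distortion lemma to remove the constant $D$. The only cosmetic discrepancy is the constant $\tfrac12$ in $x_s^*(x_s)\geq\tfrac12\norme{x_s^*}$ (Mazur's lemma as stated yields this only in the limit, so one should take $\tfrac13$ as in Proposition~\ref{propBKL}), which affects nothing but the value of $D$.
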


This property $(*)$ remains quite mysterious and although having a formally stronger result we don't known wether a dual space satisfying this property has a weak$^*$ unconditional asymptotic structure or not.  We will see some more things related to it in the following section.

\subsection{Asymptotic structure and Szlenk index}

To conclude this section we show how the small improvement of the result of \cite{BKL} from the first paragraph concerning almost bi-orthogonal systems allows to prove the pendant of the structural result of the preceding paragraph in the space $X$ with an $\ell_1$ flavour.

\begin{thm}

Let $X$ be a Banach space with a separable dual, with an unconditional asymptotic structure and with $S_Z(X)>\omega$. Then $\ell_1^n$ is in the $n^{\text{th}}$ asymptotic structure of $X$ for every $n\geq 1$.

\end{thm}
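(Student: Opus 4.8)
The plan is to mirror the proof of the preceding theorem, but to work inside $X$ with the \emph{weakly-null} vector tree rather than inside $X^*$ with the functional tree, and to take advantage of the fact that for $\ell_1$ the upper estimate is automatic. By the weak-topology analogues of the tree-characterisation lemma and of the non-distortability (optimization) lemma of Section~1, it suffices to exhibit a single constant $D\geq 1$ such that for every $n\geq 1$ there is a weakly-null normalized tree of height $n$ in $X$ whose branches are all $D$-equivalent to the unit vector basis of $\ell_1^n$; the case $p=1$ of the Krivine/Rosenthal theorem then upgrades this to $\ell_1^n\in\{X\}_n$ for every $n$.

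Fix $n\geq 1$ and a small $\delta>0$ to be specified at the end. Since $X$ has separable dual and $S_Z(X)>\omega$, Proposition~\ref{propBKL+} furnishes a weak$^*$-null bounded tree $(x_s^*)_{s\in T_n}$ in $X^*$ together with, crucially, a \emph{normalized weak-null} tree $(x_s)_{s\in T_n}$ in $X$ satisfying the three almost-biorthogonality conditions. Because $(x_s)$ is weakly-null and $X$ has an unconditional asymptotic structure, the tree characterisation of that property (the weak analogue of the lemmas of Section~1) lets us pass to a full subtree on which every branch of $(x_s)$ is $C$-unconditional, where $C\geq 1$ is the unconditionality constant of the asymptotic structure. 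The point I would emphasise is that, unlike in the $\ell_\infty$ argument, here we only dispose of unconditionality of the \emph{vector} tree $(x_s)$ and not of the functional tree $(x_s^*)$.

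It remains to estimate $\norme{\sum_{t\le s} a_t x_t}$ along a branch, where (dropping the root) $t$ ranges over the $n$ nodes $s_{\lvert 1},\dots,s_{\lvert n}$, all of which satisfy $\norme{x_t^*}\geq 1$. The $\ell_1$ upper bound $\norme{\sum_t a_t x_t}\leq\sum_t\abs{a_t}$ is immediate from the triangle inequality since each $\norme{x_t}=1$. The lower bound is the step I expect to be the crux, precisely because I cannot bound $\norme{\sum_t\varepsilon_t x_t^*}$ for arbitrary signs. I would circumvent this by using the $C$-unconditionality of $(x_s)$ to reduce to nonnegative coefficients: taking the signs $\eta_t=\sign(a_t)$ gives $\norme{\sum_t\abs{a_t}x_t}\leq C\norme{\sum_t a_t x_t}$. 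I would then test $\sum_t\abs{a_t}x_t$ against the \emph{single} all-positive functional $\phi=\sum_{t\le s}x_t^*$, whose norm is at most $3$ by condition~(1) with no appeal to unconditionality of $(x_s^*)$. The diagonal terms yield $\phi\big(\sum_t\abs{a_t}x_t\big)\geq\frac13\sum_t\abs{a_t}\norme{x_t^*}\geq\frac13\sum_t\abs{a_t}$ by condition~(2), while the off-diagonal terms are bounded below by $-n\delta\sum_t\abs{a_t}$ by condition~(3); hence $\norme{\sum_t\abs{a_t}x_t}\geq\frac13\big(\frac13-n\delta\big)\sum_t\abs{a_t}$. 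Combining this with the unconditionality reduction gives $\norme{\sum_t a_t x_t}\geq\frac{1}{3C}\big(\frac13-n\delta\big)\sum_t\abs{a_t}$. Choosing $\delta<\frac{1}{6n}$ makes $\frac13-n\delta>\frac16$, so every branch is $D$-equivalent to the $\ell_1^n$-basis with $D=18C$ independent of $n$, which is exactly what the reduction in the first paragraph requires.
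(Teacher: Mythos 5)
Your proof is correct and follows essentially the same route as the paper's: the same reduction (via the weak tree characterisation and the $p=1$ non-distortability lemma) to producing, for each $n$, a weakly-null normalized tree whose branches are uniformly equivalent to the $\ell_1^n$ basis, the same appeal to the weak-null refinement of the almost bi-orthogonal system, and the same lower estimate obtained by combining $C$-unconditionality of the vector tree with a test against the single functional $\sum_{t\le s}x_t^*$ of norm at most $3$, yielding the identical constant $D=18C$. Your explicit remarks that only the unconditionality of the vector tree (not of the functional tree) is used, and your dropping of the root before invoking the unconditional finite tree property, are if anything slightly more careful than the paper's own write-up.
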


\begin{proof}

By the weak version from \cite{Baudierandco} of the results of the first section there is a constant $C\geq 1$ such that any weak-null normalized tree of finite height in $X$ has a full subtree whose branches are all $C$ unconditional. Moreover it is sufficient to show that there is a constant $D\geq 1$ such that for all $n\geq 1$, there is a weak-null normalized tree of height $n$ in $X$ whose branches are all $D$-equivalent to the unit vector basis of $\ell_1^n$. Let us fix some $n\geq 1$ and let us fix $\delta>0$ to be chosen latter. By proposition \ref{propBKL+}, there is a weak$^*$-null bounded tree $(x_s^*)_{s\in T_n}$ in  $X^*$ and a normalized weak-null tree $(x_s)_{s\in T_n}$ in $X$ such that:
\begin{enumerate}

\item{ For all $s\in T_n\backslash\{\emptyset\}$, $\norme{x_s^*}\geq 1$ and $\norme{\sum_{t\leq s}x_t^*}\leq 3$. }

\item{ For all $s\in T_n$, $x_s^*(x_s)\geq \frac{1}{3}\norme{x_s^*}$. }

\item{ For all $s\neq t$ in $T_n$, $\abs{x_s^*(x_t)}\leq \delta$. }

\end{enumerate} As mentioned one can assume, up to passing to some full subtree, that all branches of $(x_s)_{s\in T_n}$ are $C$-unconditional. Since $(x_s)_{s\in T_N}$ is normalized, a simple triangular inequality yields: $$\norme{\sum_{t\leq s}a_{\abs{t}}x_t}\leq \sum_{i=0}^n\abs{a_i}$$ for all $a_0,\dots, a_n\in\R$ and for all $s\in T_n$ of length $n$.

Moreover, \begin{align*}
\norme{\sum_{t\leq s}a_{\abs{t}}x_t} &\geq \frac{1}{C}\norme{\sum_{t\leq s}\sign{a_{\abs{t}}}a_{\abs{t}}x_t} \\
&\geq \frac{1}{3C} \scal{\sum_{t\leq s}\abs{a_{\abs{t}}}x_t}{\sum_{t\leq s}x_t^*} \\
&\geq \left(\frac{1}{9C}-n\delta\right)\sum_{i=0}^n\abs{a_i} \\
&\geq \frac{1}{D} \sum_{i=0}^n\abs{a_i}
\end{align*} for any chosen constant $D>9C$ if $\delta$ was chosen small enough.

\end{proof}

\section{Weak$^*$ asymptotic midpoint convexity in dual spaces}

In this section we introduce a weak$^*$ midpoint asymptotic convexity property (property AMUC$^*$) and begin a systematic study of it in comparison with the well studied property AUC$^*$.

\subsection{Properties AUC$^*$ and AMUC$^*$}

Let $X$ be a Banach space and let $X^*$ be its dual space. For all $t>0$ and for all $x^*\in S_{X^*}$, let  $$\modconvunifw{X}{t,x^*}=\sup_{Y\in\wcof{X^*}}\inf_{y^*\in S_Y}(\norme{x^*+ty^*}-1)$$ and let
$$\modamucw{X}{t,x^*}=\sup_{Y\in\wcof{X^*}}\inf_{y^*\in S_Y}(\max\{\norme{x^*+ty^*},\norme{x^*-ty^*}\}-1).$$  Then let $\modconvunifw{X}{t}=\inf_{x^*\in S_{X^*}}\modconvunifw{X}{t,x^*}$ and let $\modamuc{X}{t}=\inf_{x^*\in S_{X^*}}\modamucw{X}{t,x^*}$. We say that $X^*$ is AUC$^*$ (weak$^*$ asymptoticaly uniformly convex) if $\modconvunifw{X}{t}>0$ for all $t>0$ and we say that $X^*$ is AMUC$^*$ (weak$^*$ asymptotically midpoint uniformly convex) if $\modamucw{X}{t}>0$ for all $t>0$. Note that we clearly have $\widehat{\delta}^*\geq \bar{\delta}^*$ so that any AUC$^*$ norm is AMUC$^*$. The notation comes from  \cite{JLPS} for the first modulus and the second is a weak$^*$ version of the one introduced in \cite{lenses}. Again we do not give the weak version of those moduli and the corresponding results to avoid repetitions.

 \begin{rmk}
 
 By a straightforward convexity argument, one can show that $$\modconvunifw{X}{t,x^*}=\sup_{Y\in\wcof{X^*}}\inf_{y^*\in Y,\ \norme{y^*}\geq 1}(\norme{x^*+ty^*}-1)$$ and  $$\modamucw{X}{t,x^*}=\sup_{Y\in\wcof{X^*}}\inf_{y^*\in Y,\ \norme{y^*}\geq 1}(\max\{\norme{x^*+ty^*},\norme{x^*-ty^*}\}-1).$$
 
 \end{rmk}
 
The following result is well known for AUC$^*$.

\begin{lm}

Let $X^*$ be a dual space with property AUC$^*$. Then for all $t>0$, there exists $\delta>0$ such that for all $x^*\in S_{X^*}$ and for every weak$^*$-null sequence $(x^*_k)_{k\geq 1}$ with $\norme{x^*_k} = t$, we have $\limsup \norme{x^*+x^*_k}> 1+\delta.$

\end{lm}

The proof rests on the well known fact that if Y is a weak$^*$ closed subspace of finite codimension of $X^*$ and if $(x_k^*)_{k\geq 1}$ is a bounded weak$^*$-null sequence in $X^*$ then the distance $\dist(x_k^*,Y)$ goes to 0. In the same way one can prove the following result.

\begin{lm}

Let $X^*$ be a dual space with property AMUC$^*$. Then for all $t>0$, there exists $\delta>0$ such that for all $x^*\in S_{X^*}$ and for all weak$^*$-null sequence $(x^*_k)_{k\geq 1}$ with $\norme{x^*_k} = t$, we have $\limsup \max\{\norme{x^*+x^*_k},\norme{x^*-x^*_k}\}> 1+\delta.$

\end{lm}

\begin{rmk} As before, one can put $\norme{x^*_k} \geq t$ in the two preceding results and we can simply take $\delta=\modconvunifw{X}{t}$ respectively $\delta=\modamucw{X}{t}$. Moreover the converse of the implication holds in both cases under the assumption that $X$ is separable and the moduli are then the best possible $\delta$'s.

\end{rmk}

It is also well known that property AUC$^*$ is equivalent to the so called UKK$^*$ (weak$^*$ uniform Kadec Klee) property in the form introduced in \cite{UKK}. This is the following result.

\begin{lm}\label{UKK}

A dual space $X^*$ is AUC$^*$ if and only if it satisfies the following property: for all $\varepsilon>0$ there is a $\Delta>0$ such that for all $x^*\in B_{X^*}$, if $\norme{x}>1-\Delta$ then there is a weak$^*$-neighborhood of $x^*$ such that $\diam{(V\cap B_{X^*})}\leq \varepsilon$.

\end{lm}

A direct consequence of this result is that if the dual of a Banach space $X$ is AUC$^*$ then the space $X$ has Slzenk index $S_Z(X)\leq \omega$. In fact there is a huge renorming theory around property AUC$^*$ and this is resumed in the following theorem.

\begin{thm}\label{renorming}

Let $X$ be a Banach space. The following assertions are equivalent.

\begin{enumerate}

\item{ The space $X$ satisfies $S_Z(X)\leq \omega$.}

\item{ The space $X$ admits an equivalent norm whose dual norm is AUC$^*$. }

\item{  The space $X$ admits an equivalent norm whose dual norm is $p$-AUC$^*$ for some $1\leq p<\infty$ that is to say there is a constant $c>0$ such that for all $t>0$ we have $\modconvunifw{X}{t}\geq ct^p$. }

\end{enumerate}

\end{thm}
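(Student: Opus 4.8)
The plan is to prove the cycle $(3)\Rightarrow(2)\Rightarrow(1)\Rightarrow(3)$, which isolates the whole difficulty in a single renorming construction. The implication $(3)\Rightarrow(2)$ is immediate: if an equivalent norm is $p$-AUC$^*$, that is $\modconvunifw{X}{t}\geq ct^p$ for all $t>0$, then a fortiori $\modconvunifw{X}{t}>0$ for every $t>0$, which is exactly AUC$^*$.

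For $(2)\Rightarrow(1)$ I would combine two ingredients. First, the remark following Lemma \ref{UKK} already gives that if a given dual norm is AUC$^*$ then the associated Szlenk index is at most $\omega$: the UKK$^*$ reformulation shows that at each fixed $\varepsilon$ the derivative $s_\varepsilon(B_{X^*})$ is contained in a ball of strictly smaller radius, so finitely many $\varepsilon$-derivations empty $B_{X^*}$ and $S_Z(X,\varepsilon)<\omega$ for all $\varepsilon>0$. Second, whether $S_Z(X)\leq\omega$ holds is an isomorphic invariant: passing to an equivalent norm replaces $B_{X^*}$ by a comparable weak$^*$-compact set and distorts the scale $\varepsilon$ only by a fixed multiplicative constant, so it preserves the finiteness of $S_Z(X,\varepsilon)$ for all $\varepsilon$. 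Applying the first ingredient to the equivalent AUC$^*$ norm furnished by $(2)$ and transporting the conclusion back by the second yields $(1)$.

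The heart of the statement is $(1)\Rightarrow(3)$, the renorming direction due to Knaust--Odell--Schlumprecht, Lancien and Godefroy--Kalton--Lancien (see the survey \cite{surveySzlenk}). Assuming $S_Z(X)\leq\omega$, for each $\varepsilon>0$ there is a least finite integer $n(\varepsilon)$ with $s_\varepsilon^{n(\varepsilon)}(B_{X^*})=\emptyset$. I would use this rank function to manufacture a new dual ball. For each dyadic scale $\varepsilon=2^{-k}$ one attaches to the nested derivatives $s_\varepsilon^j(B_{X^*})$ a weak$^*$-compact convex symmetric set whose Minkowski gauge is an equivalent, weak$^*$-lower-semicontinuous (hence dual) norm carrying a UKK$^*$ estimate at that single scale; amalgamating these gauges through an $\ell_p$-type combination with suitable weights produces a single equivalent dual norm that is UKK$^*$ at all scales simultaneously. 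The growth of $n(\varepsilon)$ controls the resulting modulus, and a self-improvement argument turns the mere positivity of $\modconvunifw{X}{t}$ into the power-type bound $\modconvunifw{X}{t}\geq ct^p$ required by $(3)$.

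The main obstacle is this last quantitative upgrade. Producing some equivalent AUC$^*$ norm out of $S_Z(X)\leq\omega$ is already the delicate part of Lancien's theorem, but reaching a power-type modulus demands polynomial control of $n(\varepsilon)$ in $1/\varepsilon$ after renorming, together with the simultaneous verification that the amalgamated set stays bounded with $0$ in its weak$^*$-interior (so that its gauge is an equivalent norm), that it is weak$^*$-lower-semicontinuous (so that the norm is a dual norm), and that the per-scale UKK$^*$ estimates fuse into one uniform power-type inequality. I would discharge this step by appealing to the established renorming theory rather than reconstructing it here.
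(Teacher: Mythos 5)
The paper offers no proof of this theorem at all: it is stated as a known result with a pointer to the literature (Huff's conjecture, the Knaust--Odell--Schlumprecht separable case, and the survey \cite{surveySzlenk} and \cite{JLPS} for the general and power-type versions). Your proposal is essentially the same in substance --- your arguments for $(3)\Rightarrow(2)$ and $(2)\Rightarrow(1)$ are the standard correct ones (the latter matching the remark after Lemma~\ref{UKK}), and for the only genuinely hard direction $(1)\Rightarrow(3)$ you, like the paper, ultimately defer to the established renorming theory rather than reconstructing it.
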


This result has a long history and we refer to \cite{surveySzlenk} or \cite{JLPS} for more informations and references. It was conjectured by R. Huff in \cite{Huff} and then first proved in \cite{KOS} by H. Knaust, E. Odell and Th. Schlumprecht in the separable case  with the terminology UKK$^*$.

We have a corresponding result to the Lemma \ref{UKK} above for property  AMUC$^*$ with a sort of symmetric UKK$^*$ property.

\begin{lm}

A dual space $X^*$ is AMUC$^*$ if and only if it satisfies the following property: for all $\varepsilon>0$ there is a $\Delta>0$ such that for all $x^*\in B_{X^*}$, if $\norme{x^*}>1-\Delta$ then there is a weak$^*$-neighbourhood $V$ of $x^*$ such that for all $y^*\in X^*$ of norm $\norme{y^*}\geq \varepsilon$ we have $x^*+y^*\notin V\cap B_{X^*}*$ or $x^*-y^*\notin V\cap B_{X^*}*$.

\end{lm}

\begin{proof}

Before starting the proof let us mention the following facts which are key to pass from weak$^*$ closed subspaces of finite codimension to weak$^*$ neighbourhoods of $0$ in a dual space and vice versa. 

\begin{enumerate}

\item{ Every weak$^*$ neighbourhood of $0$ in $X^*$ contains a weak$^*$ closed subspace of finite codimension of $X^*$.}

\item{ Every weak$^*$ closed subspace of finite codimension of $X^*$ can be seen as a sort of limit of weak$^*$ neighbourhoods of $0$. More precisely we can find weak$^*$ neighbourhoods of $0$ whose elements are all at the same arbitrarily small distance from the subspace. }

\end{enumerate}

This essentially comes from the fact that every weak$^*$ closed subspacs of finite codimension of a dual can be written as the finite intersection of kernels of elements of the predual.

Now let us prove the proposition. First let us assume that $X^*$ is not AMUC$^*$. Using the first fact above we immediately get that there is a $t>0$ such that for every $\delta>0$ we can find some $x^*=x^*_\delta\in S_{X^*}$ satisfying: $$ \forall W\in\mathcal{V}_{w^*}(0),\ \exists w^*\in W, \ \norme{w^*}\geq t:\ \norme{x^*\pm w^*}<1+\delta.$$ 
Let $\varepsilon=t$, fix $\Delta>0$, pick $\delta>0$ small enough for our later purpose and take $x^*=x^*_\delta\in S_{X^*}$ as above. Then let $u^*=\frac{1}{1+\delta}x^*$ and fix some weak$^*$ neighbourhood $V$ of $u^*$. Take $W\in\mathcal{V}_{w^*}(0)$ such that $V=u^*+W$ and apply the preceding property to find some $w^*$ in $W$ of norm $\norme{w^*}\geq t$ such that $\norme{x^*\pm w^*}<1+\delta$. Finally let $v^*=\frac{1}{1+\delta}w^*$. Then $u^*+v^*\in V\cap B_{X^*}$ and $u^*-v^*\in V\cap B_{X^*}$. Moreover if $\delta$ is small enough then  we have $\norme{u^*} = \frac{1}{1+\delta} > 1-\Delta$ and $\norme{v^*}\geq \frac{\varepsilon}{1+\delta} \geq \frac{\varepsilon}{2}$. Thus $X^*$ does not satisfy the symmetric UKK$^*$ property.

Second let us assume that there is an $\varepsilon>0$ such that for every $\Delta>0$ there is some $x^*=x^*_\Delta$ of norm $\norme{x^*}> 1-\Delta$ satisfying: $$\forall V\in \mathcal{V}_{w^*}(x^*):\ \exists y^*\in X^*,\ \norme{y^*}\geq \varepsilon,:\  x^*+y^*\in V\cap B_{X^*} \text{ and }x^*-y^*\in V\cap B_{X^*}.$$
Let $t=\varepsilon$, fix $\delta>0$, pick $\Delta>0$ small enough for our later purpose and take $x^*=x^*_\Delta$ as above. Then take $Z\in\wcof{X^*}$ and pick $\mu>0$ small enough for our later purpose. Using the second fact above we can choose a weak$^*$ neighbourhood of $0$ whose elements are all  at distance less than $\mu$ from $Z$. Translating, applying the preceding property, and finally projecting onto $Z$ yields a $z^*\in Z$ of norm $\norme{z^*}\geq t-\mu$ such that $\norme{x^*\pm z^*}\leq 1+\mu$.  So if $\Delta$ and $\mu$ are small enough we have $\norme{\frac{x^*}{\norme{x^*}}\pm\frac{z^*}{\norme{x^*}}}\leq \frac{1+\mu}{1-\Delta} < 1+\delta$ and $\norme{\frac{z^*}{\norme{x^*}}}\geq \frac{t-\mu}{1-\Delta}\geq\frac{t}{2}$. Thus $X^*$ is not AMUC$^*$.

\end{proof}

Again there is a sequential version of this result in the separable case.

\begin{lm}

Let $X$ be a separable Banach space. Then $X^*$ is AMUC$^*$ if and only it satisfies the following property: for all $\varepsilon>0$, there exists $\delta\in(0,1)$ such that for all $x^*\in X^*$, if $\norme{x^*}> 1-\delta$ then for all weak$^*$-null sequence $(x_k^*)_{k\geq 1}$ in $X^*$ such that $\norme{x_k^*}\geq \varepsilon$ for all $k\geq 1$ there is a $k_0\geq 1$ such that $\norme{x^*+x_{k_0}^*}> 1$ or $ \norme{x^*-x_{k_0}^*}> 1$.

\end{lm}

As we already mentioned in the preceding section the Szlenk index  has a tight relation with some tree behaviour in the dual of the considered space. In particular, in a dual space $X^*$ with property AUC$^*$, all the weak$^*$-null separated trees of a large enough height must get out of the unit ball.   We can show the same kind of results for AMUC$^*$.

\begin{prp}\label{star}

Let $X^*$ be AMUC$^*$. Then $X^*$ has property $\neg (*)$ that is to say: for all $\varepsilon>0$ there is an integer $N\geq 1$ such that: for all $n\geq N$ and for all weak$^*$-null tree $(x_s^*)_{s\in T_n}$ in $X^*$ such that $x_\emptyset^*\in B_X^*$ and $\norme{x_s^*}\geq \varepsilon$ for all $s\in T_n$ non-empty, there is a sequence $s\in T_n$ of maximal length and a sequence of signs $(\varepsilon_i)_{i=1}^n$ in $\{-1,1\}$ such that $\norme{ x_\emptyset^*+ \sum_{i=1}^n\varepsilon_i x_{s_{\lvert i}}^*} > 1$.

\end{prp}

\begin{proof}

Let us assume that $X^*$ fails this property. Then there exists $\varepsilon>0$ and infinitely many $n$'s for which there exists a weak$^*$-null tree $(x^*_s)_{s\in T_n}$ in $X^*$ such that $\norme{x^*_s}\geq \varepsilon$ for all $s\in T_n$ non-empty and $\norme{ x^*_\emptyset+ \sum_{i=1}^n\varepsilon_i x^*_{s_{\lvert i}}} \leq 1$ for all $s$ in $T_n$ of maximal length and for all choices of signs. Fix such an $n$ big enough for our later purpose and fix such a weak$^*$ null tree. Note that by weak$^*$ lower continuity of the norm, one has  in fact $\norme{ x^*_\emptyset+ \sum_{i=1}^k\varepsilon_i x^*_{s_{\lvert i}}} \leq 1$ for all $1\leq k\leq n$, $s$ of length $k$ and choices of signs. Now let us assume that $X^*$ is AMUC$^*$. Then there is a $\delta=\delta(\varepsilon)>0$ such that for all $x\in S_{X^*}$ and for all $(x^*_k)_{k\geq 1}$ weak$^*$ null with $\norme{x^*_k}\geq \varepsilon$ one has $\limsup \max\{ \norme{x^*+x^*_k},\norme{x^*-x^*_k}\} > 1+\delta.$ In particular, for all $x^*\neq 0\in B_X$ and for all $(x^*_k)_{k\geq 1}$ weak$^*$-null with $\norme{x^*_k}\geq \varepsilon$ one has $\limsup \max\{ \norme{x^*+x^*_k},\norme{x^*-x^*_k}\} > \norme{x^*}(1+\delta).$ First assume that $\norme{x^*_\emptyset}\geq \frac{1}{2}$. Applying successively this property in our weak$^*$-null tree we can get a sequence $s\in T_n$ of maximal length and a sequence of signs $(\varepsilon_i)_{i=1}^n$ in $\{-1,1\}$ such that $\norme{ x^*_\emptyset+ \sum_{i=1}^n\varepsilon_i x^*_{s_{\lvert i}}} > \norme{x^*_\emptyset}(1+\delta)^n \geq\frac{1}{2}(1+\delta)^n $. Second if $\norme{x^*_\emptyset}< \frac{1}{2}$ then one can apply the preceding result with $\tilde{x^*_\emptyset}=-(1-\norme{x^*_\emptyset})\frac{x^*_\emptyset}{\norme{x^*_\emptyset}}$ and get a sequence $s\in T_N$ and a sequence of signs such that $$\norme{ x^*_\emptyset+ \sum_{i=1}^n\varepsilon_i x^*_{s_{\lvert i}}} \geq \norme{ \tilde{x^*_\emptyset}+ \sum_{i=1}^n\varepsilon_i x^*_{s_{\lvert i}}}-\frac{1}{\norme{x^*_\emptyset}}\geq \frac{1}{2}\left((1+\delta)^n-1\right).$$ Clearly this is bigger than $1$ if $n$ was chosen big enough. A contradiction.

\end{proof}

In view of this result it is quite clear that if one restricts to the case of duals of separable spaces with a weak$^*$ unconditional asymptotic structure one gets the following renorming theorem.

\begin{thm}

Let $X$ be a separable space and let us assume that $X^*$ has a weak$^*$ unconditional asymptotic structure. Then the following properties are equivalent. 

\begin{enumerate}

\item{ $S_Z(X)\leq \omega$ }

\item{  The space $X$ admits an equivalent norm whose dual norm is AUC$^*$ }

\item{  The space $X$ admits an equivalent norm whose dual norm is AMUC$^*$. }

\end{enumerate}

\end{thm}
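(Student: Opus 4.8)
The plan is to prove the cycle $(1)\Rightarrow(2)\Rightarrow(3)\Rightarrow(1)$, the only genuinely new input beyond the general renorming theory being Proposition~\ref{star} combined with the tree description of the Szlenk index from Section~2. The implication $(1)\Rightarrow(2)$ --- indeed the whole equivalence $(1)\Leftrightarrow(2)$ --- is nothing but Theorem~\ref{renorming}, which is valid for an arbitrary Banach space and needs neither the separability of $X$ nor the asymptotic unconditionality of $X^*$; I would simply quote it.

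For $(2)\Rightarrow(3)$ I would invoke the elementary fact, recorded just after the definition of the two moduli, that any AUC$^*$ norm is AMUC$^*$. Quantitatively this is the pointwise domination $\modamucw{X}{t}\geq\modconvunifw{X}{t}$ of the weak$^*$ asymptotic moduli, which holds for every equivalent dual norm. Hence if $X$ carries an equivalent norm whose dual norm is AUC$^*$, the very same norm already has AMUC$^*$ dual, and no further renorming is needed.

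The substance of the theorem lies in $(3)\Rightarrow(1)$. Assume $X$ admits an equivalent norm whose dual norm $|\cdot|^*$ is AMUC$^*$. By Proposition~\ref{star}, the renormed dual $(X^*,|\cdot|^*)$ satisfies property $\neg(*)$. I then want to feed this into the equivalence, valid for duals of separable spaces that carry a weak$^*$ unconditional asymptotic structure, between property $(*)$ and the condition $S_Z(X)>\omega$ that was established in Section~2 (from the tree characterisation of Lemma~\ref{Szlenktrees} together with the unconditional structure). Its contrapositive reads $\neg(*)\Rightarrow S_Z(X)\leq\omega$. Since the Szlenk index is an isomorphic invariant, deducing $S_Z(X)\leq\omega$ for the renorming $|\cdot|$ yields the same for the original norm, which is exactly $(1)$; the cycle then closes.

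The point that requires care --- and the step I expect to be the main obstacle --- is that the equivalence just used demands a weak$^*$ unconditional asymptotic structure for the norm $|\cdot|^*$, while the hypothesis only provides it for the original dual norm. I would handle this by observing that the property is stable under renorming: equivalent norms on $X$ induce the same weak$^*$ topology on $X^*$, hence the same family $\wcof{X^*}$ of weak$^*$ closed finite-codimensional subspaces, and $C$-unconditionality of a finite sequence passes to an equivalent dual norm at the cost of multiplying $C$ by the square of the equivalence constant. Therefore $(X^*,|\cdot|^*)$ still has a weak$^*$ unconditional asymptotic structure, with a possibly larger constant, which is all the equivalence requires. Everything else is a direct concatenation of Theorem~\ref{renorming}, Proposition~\ref{star} and the Szlenk tree characterisation, with no new quantitative estimates to carry out.
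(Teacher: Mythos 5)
Your proof is correct and follows essentially the same route as the paper: $(1)\Leftrightarrow(2)$ is Theorem~\ref{renorming}, $(2)\Rightarrow(3)$ is the comparison of the two moduli, and $(3)\Rightarrow(1)$ combines Proposition~\ref{star} with the fact (recorded in Section~2, and reproved inline in the paper via the weak$^*$ unconditional finite tree property) that under a weak$^*$ unconditional asymptotic structure $S_Z(X)>\omega$ is equivalent to property $(*)$, so that $\neg(*)$ forces $S_Z(X)\leq\omega$ via Lemma~\ref{Szlenktrees}. Your explicit verification that the weak$^*$ unconditional asymptotic structure and property $\neg(*)$ survive an equivalent renorming is a point the paper leaves implicit, and it is handled correctly.
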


\begin{proof}

$(1)\implies (2)$ comes from the renorming theorem for AUC$^*$ and $(2)\implies (3)$ is straightforward in view of the AUC$^*$ and AMUC$^*$ moduli.

For $(3)\implies (1)$ recall that the dual $X^*$ of a separable space with a weak$^*$ unconditional asymptotic structure has the weak$^*$ unconditional finite tree property: there is a constant $C\geq 1$ such that one can extract a  full subtree whose branches are all $C$-unconditional of any weak$^*$-null normalized tree in $X^*$. So if $X^*$ is AMUC$^*$, this combined with the preceding proposition \ref{star} yields the following property:  for all $\varepsilon>0$ there is an integer $N\geq 1$ such that: for all $n\geq N$ and for all weak$^*$-null tree $(x_s^*)_{s\in T_n}$ in $X^*$ such that $x_\emptyset^*\in B_X^*$ and $\norme{x_s^*}\geq \varepsilon$ for all $s\in T_n$ non-empty, there is a sequence $s\in T_n$ of maximal length and a choice of signs $\varepsilon_i$  such that $\norme{ x_\emptyset^*+ \sum_{i=1}^n\varepsilon_i x_{s_{\lvert i}}^*} > 1$ and consequently $$\norme{ x_\emptyset^*+ \sum_{i=1}^n x_{s_{\lvert i}}^*} \geq C^{-1}\norme{ x_\emptyset^*+ \sum_{i=1}^n\varepsilon_i x_{s_{\lvert i}}^*} > C^{-1}.$$ So by normalizing and by using the characterization \ref{Szlenktrees} mentioned in the preceding section we have $S_Z(X)\leq \omega$.

\end{proof}

\subsection{Size of approximate midpoints in dual spaces} Let $M$ be a metric space. For all $x,y\in M$ and for all $\delta>0$ let $$\midp{x}{y}{\delta}=\{z\in M:\ \max\{d(x,z), d(y,z)\} \leq \frac{1+\delta}{2}d(x,y)\}$$ be the set of $\delta$-approximate midpoints between $x$ and $y$. It appears that the study of the size of those sets yields simple arguments for preventing coarse-Lipschitz embeddability of certain Banach spaces into others. The first result of this type is due to Enflo (in an unpublished paper) and can be stated as follows: $L_1$ does not coarse-Lipschitz embed into $\ell_1$. Let us mention that this can also be used to distinguish between $\ell_p$ spaces under coarse-Lipschitz embeddings and that this argument was pushed even further for example in \cite{KR} or  \cite{Kalton}.

In \cite{lenses}, property AMUC was related to the Kuratowski measure of non compactness of approximate midpoint sets in the following way.

\begin{lm}[\cite{lenses} theorem 2.1]

A Banach space $X$ is AMUC if and only if $\alpha\left( \midp{x}{-x}{\delta}\right)$ tends to $0$ as $\delta$ tends to $0$ uniformly on $S_X$.

\end{lm}

Let us recall that the Kuratowski measure of non-compactness $\alpha$ of a set $A$ is the infimum of all $\varepsilon>0$ such that $A$ can be covered by a finite number of sets of diameter less than $\varepsilon$. Let us also point out that this criteria is sort of an asymptotic version of a characterisation of uniform convexity in terms of diameter of approximate midpoint sets (more about that is described in the same paper).

One can show that the same characterization holds for property AMUC$^*$ on a dual space and this yields the following result.

\begin{prp}

Let $X^*$ be a dual space. Then $\norme{.}_{X^*}$ is AMUC if and only if it is  AMUC$^*$.

\end{prp}

\begin{proof}

It is clear from the definition of the moduli that if $\norme{.}_{X^*}$ is AMUC$^*$ then it is also AMUC.

Now let us assume that $\norme{.}_{X^*}$ is not AMUC$^*$. Following [DKR\&co theorem 2.1] we will prove that $\alpha\left( \midp{x^*}{-x^*}{\delta}\right)$ does not tend uniformly to $0$ with $\delta$ on $S_{X^*}$. Since $\norme{.}_{X^*}$ is not AMUC$^*$ we can find some $t\in(0,1)$ such that: $$\forall \delta>0,\ \exists x_\delta^*\in S_Y:\  \forall Y\in\wcof{X^*},\ \exists y^*\in S_Y:\ \max\{ \norme{x_\delta^*+ty^*}, \norme{x_\delta^*-ty^*} \}\leq 1+\delta,$$ that is to say $ty^*\in \midp{x_\delta^*}{-x_\delta^*}{\delta}$.

Fix $\delta>0$ and pick $x^*=x^*_\delta$ satisfying this property. Then one can choose inductively a sequence $(y_k^*)_{k\geq 1}$ in $S_{X^*}$, and a sequence $(y_k)_{k\geq 1}$ in $S_X$ such that \begin{enumerate}

\item{ $\scal{y_k}{y_k^*}\geq  \frac{1}{2}$ and $y_k^*\in \bigcap_{i=1}^{k-1} \ker y_i$ }

\item{ $ty_k^*\in \midp{x^*}{-x^*}{\delta}$. }

\end{enumerate}

Then we have $\norme{ty_k^*-ty_l^*}\geq \scal{y_k}{ty_k^*-ty_l^*}\geq \frac{t}{2}$ for all $k> l$ and thus $\alpha\left( \midp{x}{-x}{\delta}\right)\geq \alpha \left( \{ty_k^*\}_{k\geq 1} \right) \geq \frac{t}{2}$. By the preceding lemma, $\norme{.}_{X^*}$ is not AMUC.

\end{proof}

As a consequence of this simple result we can show that properties AMUC$^*$ and AUC$^*$ are not equivalent up to renorming.

\begin{crl}

There exists a separable Banach space whose dual norm is AMUC$^*$ but which admits no equivalent norm whose dual norm is AUC$^*$.

\end{crl}

\begin{proof}

Let $JT$ be the James tree-space. Maria Girardi proved in \cite{Girardi} that the dual $JT^*$ of $JT$ is AUC. Thus it is AMUC and by the preceding result it is also AMUC$^*$. But since $JT^*$ is not separable while $JT$ is separable, $JT$ does not have any equivalent norm whose dual norm is AUC$^*$.

\end{proof}

\begin{rmk}

Also note that $JT^*$ being AMUC$^*$ it has property $\neg(*)$ that is to say every sufficiently high weak$^*$-convergent tree in the unit ball of $JT^*$ must get out of it if we choose the right direction. This really enlightens the importance of this choice of direction because the same unit ball contains arbitrarily high separated weak$^*$-convergent trees since $JT$ is separable and has a non-separable dual (and so has infinite Szlenk index).

\end{rmk}

\subsection{A few remarks on the equivalence AUC/AMUC up to renorming} Let us just point out some results about property AMUC that can be obtained in the same way as there weak$^*$ analogues.

\begin{prp}

Let $X$ be AMUC. Then for all $\varepsilon>0$ there is an integer $N\geq 1$ such that: for all $n\geq N$ and for every weakly-null tree $(x_s)_{s\in T_n}$ in $X$ such that $x_\emptyset\in B_X$ and $\norme{x_s}\geq \varepsilon$ for all $s\in T_n$ non-empty, there is a sequence $s\in T_n$ of maximal length and a sequence of signs $(\varepsilon_i)_{i=1}^n$ in $\{-1,1\}$ such that $\norme{ x_\emptyset+ \sum_{i=1}^n\varepsilon_i x_{s_{\lvert i}}} > 1$.

\end{prp}
 
This immediately yields a result concerning the weak-Szlenk index of Banach spaces with property AMUC and an unconditional (weak) asymptotic structure.

\begin{crl}

Let $X$ be a Banach space with separable dual and with an unconditional asymptotic structure. If $X$ is AMUC, then $w-S_Z(X)\leq \omega$. In particular, X has PCP.

\end{crl}

Let us recall that the weak Szlenk index $w-S_Z(X)$ of a Banach space $X$ is obtained by applying the same procedure as for the Szlenk index in the space itself that is by taking away all the small weakly open subsets of the unit ball of $X$ and then iterating.  Unfortunately it is not possible to use this result to get a renorming result for AMUC in this context since there is no renorming theory for AUC in full generality. Still let us just mention that in view of the results from the preceding paragraph one has the following.

\begin{prp}

Properties AMUC and AUC are equivalent up to renorming in the class of duals of separable spaces with a weak$^*$ unconditional asymptotic structure.

\end{prp}

\section{Embeddability of countably branching bundle graphs into dual spaces}
 
In this section we introduce the so called bundle graphs and we prove the main result of the paper by using the notion of good $\ell_p$-trees.

\subsection{Good $\ell_p$-trees and  weak$^*$ asymptotic structure} Since Mazur's work it is a well known fact that it is possible to extract a basic subsequence from every weak or weak$^*$-null normalized sequence. A simple extension of Mazur's proof using the concept of linear compatible ordering allows to prove the tree version of this result whose weak analogue is used and proved for example in \cite{Baudierandco}.

\begin{lm} Let $X$ be a Banach space and let $(x^*_s)_{s\in T_n}$ be a weak$^*$-null normalized tree in $X^*$. For all $\delta\in(0,1)$ there is a full subtree $T$ of $T_n$ and a compatible linear ordering $(\sigma_i)_{i\geq 1}$ of $T$ such that the sequence $(x^*_{\sigma_i})_{i\geq 1}$ is $(1+\delta)$-basic.

\end{lm}

\begin{rmk}

Although not required in this paper let us mention that it is in fact possible to do a bit better in the weak$^*$ case: we can extract a weak$^*$ basic full subtree from every weak$^*$ null normalized tree in the sense of \cite{weakstarbasic} .

\end{rmk}

Now let $X$ be a Banach space, let $(\sigma_i)_{i\geq 1}$ be a linear compatible ordering of $T_n$, let $p\in[1,\ \infty]$ and let $C,D\geq 1$. A normalized tree $(x_s)_{s\in T_n}$ in $X$ is called a $(C,D)$-good $\ell_p$-tree of height $n$ if if satisfies the two following properties:
\begin{enumerate}

\item{ All branches are $C^2$-equivalent to the unit vector basis of $\ell_p^{n+1}$. }

\item{ The sequence $(x_{\sigma_i})_{i\geq 1}$ is $D$-basic. }

\end{enumerate}

We say that $X$ contains good $\ell_p$-trees of arbitrary height almost isometrically if for every $n\geq 1$ and for every $\varepsilon>0$, $X$ contains a $(1+\varepsilon,1+\varepsilon)$-good $\ell_p$-tree of height $n$.

Following \cite{Baudierandco} (in the weak case) we can use the tree version of Mazur's lemma and the tree reformulations from the first section to get the following result.

\begin{lm}

Let $X^*$ be the dual of a separable Banach space $X$ such that $\ell_p^n\in \{X^*\}_n^{w^*}$  for every $n\geq 1$. Then $X^*$ contains good $\ell_p$-trees of arbitrary height almost isometrically.

\end{lm}

Thus, as a consequence of the results of the third section and using the weak analogue from \cite{Baudierandco}, we get the following results.

\begin{crl}

Let $X^*$ be the dual of a separable Banach space $X$ with $S_Z(X)>\omega$, and assume that $X^*$ has a weak$^*$ unconditional asymptotic structure. Then $X^*$ contains good $\ell_\infty$-trees of arbitrary height almost isometrically.

\end{crl}

\begin{crl}

Let $X$ be a Banach space with separable dual, with an unconditional asymptotic structure and with $S_Z(X)>\omega$. Then $X$ contains good $\ell_1$-trees of arbitrary height almost isometrically.

\end{crl}

\subsection{Bundle graphs and embeddability results}

A top-bottom graph is a graph with two distinguished vertices, one designated as the top and the other as the bottom. A countably branching bundle graph is a top-bottom graph which can be formed, starting by a path of length $1$, by a finite sequence of the following operations: 

\begin{enumerate}

\item{ Parallel composition: given two countably branching bundle graphs , identify the top of one with the bottom of the other and let the bottom of the first (respectively top of the second) be the bottom (respectively the top) of the new graph.}

\item{ Series composition: take countably many copies of a countably branching bundle graph and identify all the bottoms (respectively all the tops) with each other. } 

\end{enumerate}

A non-trivial bundle countably branching bundle graph is a bundle graph obtained by such a sequence with at least one series composition. We endow every bundle graph with its graph distance.

One can have a look at \cite{Swift} for a very detailed survey of properties of bundle graphs (in particular an explicit formula for the distance in such a graph is given and some results of embeddability into Banach spaces are proved). In particular, it was proved in this paper that if one consider the operation $\oslash$ which consist in replacing every edge of some countably branching bundle graph by another countably branching bundle graph, one gets a new countably branching bundle graph. Thus, for every countably branching bundle graph $G$, one can consider the family $(G^{\oslash n})_{n\geq 1}$ of countably branching bundle graphs (with growing height) generated by $G$ defined recursively by $G^{\oslash 1}=G$ and $G^{\oslash (n+1)}=G\oslash G^{\oslash n}$.

In \cite{Baudierandco}, the authors used a self improvement argument and an approximate midpoint argument coming from the study of AMUC property to show the following result (with an estimate of the distortion).

\begin{thm}[theorem 4.1 \cite{Baudierandco}]

Let $G$ be a non-trivial countably branching bundle graph and let $X$ be an AMUC Banach space. Then the family $(G^{\oslash n})_{n\geq 1}$ does not equi-Lipschitz embed into $X$.

\end{thm}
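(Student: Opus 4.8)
The plan is to argue by contradiction, using the characterization of AMUC in terms of the Kuratowski measure of approximate midpoint sets. Suppose the family $(G^{\oslash n})_{n\geq 1}$ equi-Lipschitz embeds into $X$: after rescaling each embedding there is a constant $D\geq 1$ and, for every $n$, a map $f_n:G^{\oslash n}\to X$ with $d(u,v)\leq \norme{f_n(u)-f_n(v)}\leq D\, d(u,v)$ for all vertices $u,v$. By the lens lemma (\cite{lenses}, theorem 2.1) and the homogeneity of the midpoint sets, for every $\eta>0$ there is $\delta_0>0$ such that $\alpha\big(\midp{u}{v}{\delta_0}\big)\leq \eta\norme{u-v}$ for all $u,v\in X$; one reduces to $\tfrac{u+v}{2}=0$ by translation and to $\norme{u-v}=2$ by scaling. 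I will apply this with $\eta$ fixed below as a small multiple of $1/D$.

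First I would isolate the midpoint structure of the graphs. Since a non-trivial bundle graph contains a series composition, at every scale of the recursion defining $G^{\oslash n}$ there is a sub-copy with a top $t'$, a bottom $b'$, and countably many vertices $m_1,m_2,\dots$ that are exact midpoints, $d(t',m_k)=d(m_k,b')=\tfrac12 d(t',b')$, and that are pairwise separated by the full diameter, $d(m_k,m_l)=d(t',b')$ for $k\neq l$ (any path between two of them must pass through $t'$ or $b'$). Hence under $f_n$ the points $f_n(m_k)$ are pairwise separated by at least $d(t',b')\geq \tfrac1D\norme{f_n(t')-f_n(b')}$, so $\alpha$ of the set $\{f_n(m_k)\}_k$ is at least $\tfrac1D\norme{f_n(t')-f_n(b')}$. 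The difficulty is that a single scale only certifies $f_n(m_k)\in\midp{f_n(t')}{f_n(b')}{\delta}$ with $\delta\approx D$, which is far too large to feed into the lens lemma.

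This is where the self-improvement enters, and it is the main obstacle. The recursion $G^{\oslash(n+1)}=G\oslash G^{\oslash n}$ produces $n$ nested scales of such midpoint configurations along any top-to-bottom chain, and the aim is to exploit them to find, for $n$ large, a single sub-copy $[t',b']$ on which the exact midpoints map to genuinely good approximate midpoints, with defect $\delta\leq\delta_0$, while retaining the separation proportion $\geq 1/D$. Concretely I would set up a recursion on the optimal distortion $c_n$ of $G^{\oslash n}$ into $X$: an embedding of $G^{\oslash(n+1)}$ of distortion $c$ should induce, through the top-scale midpoint analysis combined with the lens estimate applied to the sub-copies, an embedding of $G^{\oslash n}$ of strictly smaller distortion $h(c)<c$, with a definite gain bounded away from $0$ as long as the relevant midpoint defect stays above $\delta_0$; iterating across the scales then forces $c_n\to\infty$, contradicting $\sup_n c_n\leq D<\infty$. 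The delicate point is that the natural telescoping of \emph{absolute} midpoint defects along a single halving chain is bounded only by roughly $(D-1)\,d(t,b)$, whereas what one needs is control of the \emph{relative} defects, which do not telescope; overcoming this requires using the countable branching at each scale together with a Ramsey/pigeonhole selection of branches on which the relative defect remains controlled, so that some scale necessarily exhibits defect $\leq\delta_0$.

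Once the self-improvement yields a sub-copy $[t',b']$ whose exact midpoints satisfy $f_n(m_k)\in\midp{f_n(t')}{f_n(b')}{\delta_0}$, the contradiction is immediate: this set contains the infinite, $\tfrac1D\norme{f_n(t')-f_n(b')}$-separated family $\{f_n(m_k)\}_k$, so its Kuratowski measure is at least $\tfrac1D\norme{f_n(t')-f_n(b')}$, while the lens lemma bounds it by $\eta\norme{f_n(t')-f_n(b')}$. Having chosen $\eta<\tfrac1D$ at the outset, this is absurd, and the proof is complete.
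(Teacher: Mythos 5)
The paper does not prove this statement: it is quoted as Theorem~4.1 of \cite{Baudierandco}, so your attempt can only be measured against the strategy the paper attributes to that reference, namely ``a self improvement argument and an approximate midpoint argument coming from the study of AMUC property''. You identify exactly these two ingredients, and your two outer steps are sound: the series compositions inside $G^{\oslash n}$ do provide, at each of the $n$ scales, countably many exact midpoints $m_k$ of a pair $(t',b')$ with $d(m_k,m_l)=d(t',b')$, and once one such configuration is found whose images all lie in $\midp{f(t')}{f(b')}{\delta_0}$, the $\frac{1}{D}\norme{f(t')-f(b')}$-separation of the $f(m_k)$ contradicts the rescaled lens lemma for $\eta<\frac{1}{D}$.

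The gap is the middle step. You announce a recursion on distortions, call it ``the main obstacle'', record that the relative defects ``do not telescope'', and appeal to an unspecified Ramsey/pigeonhole selection; none of this is carried out, so the proposal is a plan at precisely the point where the theorem is actually proved. The difficulty you raise has a standard resolution that your bookkeeping misses: the quantity to track is not a midpoint defect but the expansion ratio $\rho(t',b')=\norme{f(t')-f(b')}/d(t',b')$ of the endpoints of the nested sub-copies. At a given scale the dichotomy is: either infinitely many $m_k$ satisfy $f(m_k)\in\midp{f(t')}{f(b')}{\delta_0}$, which is already the desired contradiction, or some $m_k$ has $\max\{\norme{f(t')-f(m_k)},\norme{f(m_k)-f(b')}\}>\frac{1+\delta_0}{2}\norme{f(t')-f(b')}$; since $d(t',m_k)=d(m_k,b')=\frac{1}{2}d(t',b')$, the corresponding half then satisfies $\rho\geq(1+\delta_0)\rho(t',b')$, i.e.\ the ratio improves \emph{multiplicatively}, not additively. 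As $1\leq\rho\leq D$ throughout (your normalization makes $f$ non-contractive), the second alternative can occur at most $\log_{1+\delta_0}D$ consecutive times, so for $n$ large the first alternative must occur at some scale. To make even this rigorous one must still verify the combinatorial facts you gloss over: that the halves $[t',m_k]$ and $[m_k,b']$ contain isometric copies of the next-level bundle graph so that the descent can continue, and that exact midpoint vertices exist (parity of $d(t',b')$), which is where the structure theory of bundle graphs from \cite{Swift} or \cite{Baudierandco} is genuinely needed. As written, your argument does not establish the theorem.
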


Generalizing a result from \cite{Baudierandco} concerning the countably branching diamond graphs, Swift proved the following result.

\begin{thm}\label{Swift}

Let $X$ be a Banach space which contains good $\ell_\infty$-trees of arbitrary height almost isometrically. Then every non-trivial countably branching bundle graph embeds Lipschitz in $X$ with distortion less than $6+\varepsilon$.

\end{thm}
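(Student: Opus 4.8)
This statement is the result of Swift \cite{Swift}; the following is the strategy I would follow to prove it. The plan is to exploit the two structural features encoded in the definition of a good $\ell_\infty$-tree: on the one hand the almost-isometric $\ell_\infty^{n+1}$-behaviour of branches, which makes the $\ell_\infty$-norm compute ``routing through a junction'' distances \emph{exactly}, and on the other hand the $(1+\varepsilon)$-basic linear ordering, which provides bounded coordinate projections and thereby prevents distances from collapsing. I would first isolate the elementary non-trivial bundle graph — a single series composition of two edges, i.e.\ the countably branching ``theta'' graph with bottom $b$, top $t$ and countably many midpoints $(m_j)_j$ — and check that it embeds almost isometrically into the linear span of one node of a good $\ell_\infty$-tree. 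Concretely, using a node $(x_{s\smallfrown j})_j$ together with the vertical direction of the branch, one sends $b\mapsto 0$, $t$ to the ``top'' of the branch, and $m_j$ to the partial sum through the $j$-th child; the almost-$\ell_\infty$ computation of the branch then reproduces $d(m_i,m_j)=2$ (routing through $b$ or $t$) as a maximum of two unit coordinates, exactly as in the isometric embedding of the $4$-cycle into $\ell_\infty^2$.

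Next I would set up the recursion that mirrors the inductive construction of a general bundle graph from parallel and series compositions. The crucial point is the self-similarity of good $\ell_\infty$-trees: below any node of a good $\ell_\infty$-tree of height $n$ sits, up to the passage to a full subtree, a good $\ell_\infty$-tree of smaller height, so one may replace each edge of the embedding of a partial graph by a rescaled embedding of the graph substituted into that edge. A parallel composition corresponds to concatenating along a branch (stacking vertical scales), while a series composition corresponds to passing to the children of a node and allocating to them disjoint later coordinates. I would carry the embedding $f$ level by level, assigning to each vertex a suitable weighted partial sum $\sum_{t\le s(v)}\lambda_{|t|}\,x_t$ along the tree node $s(v)$ attached to $v$, with the scales $\lambda_i$ chosen so that each composition step rescales by the correct factor.

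The upper (Lipschitz) bound is then the easy half: each edge of the graph changes the image in a single controlled increment, and the triangle inequality together with the basic constant bounds $\|f(u)-f(v)\|$ from above by a fixed multiple of $d(u,v)$. The lower bound is where the work concentrates: for two vertices whose tree nodes $s,t$ diverge after a common prefix of length $p$, the difference $f(v)-f(u)$ is supported on two disjoint subtrees beyond level $p$, and I would estimate its norm from below by pairing with the biorthogonal functionals of the $(1+\varepsilon)$-basic ordering and invoking the almost-$\ell_\infty$ equivalence of branches to recover the junction-routing value of $d(u,v)$. I expect the main obstacle to be precisely this constant bookkeeping: one must show that the errors accumulated through the recursion, together with the interaction between different scales — which is exactly what the $D$-basic hypothesis is there to control — combine to an overall distortion that stays below $6+\varepsilon$, and then let the good-tree parameters tend to $(1,1)$ to finish.
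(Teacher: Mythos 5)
First, a point of reference: the paper does not prove this statement itself --- it is quoted verbatim from Swift's paper \cite{Swift} and used as a black box --- so there is no internal proof to compare against. Judged on its own terms, your outline has two genuine gaps. The first is the base case. You claim that the elementary non-trivial bundle graph (bottom $b$, top $t$, countably many midpoints $m_j$ pairwise at distance $2$) embeds \emph{almost isometrically} into the span of one node, ``exactly as in the isometric embedding of the $4$-cycle into $\ell_\infty^2$.'' The $4$-cycle trick works because two midpoints can be separated by opposite signs $\pm e_2$ on a single coordinate; with countably many midpoints this fails. If you send $m_j$ to a partial sum through the $j$-th child, then $f(m_i)-f(m_j)$ is essentially the difference of two node elements, whose norm in the almost-$\ell_\infty$ branch structure is about $1$, not $2$. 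In fact one can show (a $\Delta$-system plus Ramsey argument on the coordinates where entries have modulus close to $1$) that the unit ball of $c_0$ contains no infinite $2$-separated subset, so even in $c_0$ --- the model space containing good $\ell_\infty$-trees --- the countably branching theta graph admits no almost isometric embedding. A loss of roughly a factor $2$ at every series composition is therefore unavoidable, and this is precisely where the constant $6$ (rather than $1+\varepsilon$) comes from; an induction whose base case is claimed to be almost isometric starts from a false premise.

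The second gap is the recursion itself. A general non-trivial bundle graph is produced by an \emph{unbounded} number of parallel and series compositions, and the theorem asserts a distortion bound of $6+\varepsilon$ uniform over all such graphs. If, as your scheme suggests, each composition step replaces an edge by a rescaled sub-embedding and incurs its own multiplicative error (from the $(1+\varepsilon)$-basic constant, the almost-$\ell_\infty$ equivalence, and the factor-$2$ node loss above), the accumulated distortion of an $n$-fold composition would grow with $n$. You flag this as ``constant bookkeeping,'' but it is not bookkeeping --- it is the heart of the matter, and the resolution in \cite{Swift} is structural: one writes down the embedding globally (assigning to each vertex a weighted signed partial sum along a branch determined by the vertex's position in the construction history of the graph) and verifies the upper and lower Lipschitz estimates in one pass against the explicit distance formula for bundle graphs, so that the losses do not compound multiplicatively across composition levels. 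Without that global verification, or at least an argument that the per-step losses telescope rather than multiply, the proposal does not yield a distortion bounded independently of the graph, let alone one below $6+\varepsilon$.
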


Thus, combining this result with the containment of good $\ell_\infty$-trees coming from \cite{Baudierandco}, he got (\cite{Swift} corollary 3.4) the embeddability of any countably branching bundle graph into a separable reflexive Banach space with an unconditional asymptotic structure and with $S_Z(X^*)>\omega$. By our result from the last section, we can extend this to dual spaces in the following.

\begin{crl}

Let $X^*$ be the dual of a separable Banach space $X$ with $S_Z(X)>\omega$, and assume that $X^*$ has a weak$^*$ asymptotic unconditional structure. Then every non-trivial countably branching bundle graph embeds Lipschitz in $X^*$ with distortion less than $6+\varepsilon$.

\end{crl}

Then combining this and the non-embeddability result from \cite{Baudierandco} one obtains the following metric characterization.

\begin{thm}

Let $X^*$ be the dual of a separable Banach space $X$ and assume that $X^*$ has a weak$^*$ asymptotic unconditional structure. Also let $G$ be any non-trivial countably branching bundle graph. Then the following properties are equivalent.
\begin{enumerate}

\item{ $S_Z(X)\leq \omega$ }

\item{ $X$ admits an equivalent norm whose dual norm is AUC$^*$ }

\item{ $X$ admits an equivalent norm whose dual norm is AMUC$^*$ }

\item{ The family $(G^{\oslash n})_{n\geq 1}$ does not equi-Lipschitz embed into $X^*$.}

\end{enumerate}

\end{thm}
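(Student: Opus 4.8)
The plan is to prove the equivalence by chaining together the linear renorming results already established in the preceding sections with the two metric embedding theorems quoted just above. The four conditions split naturally into a "linear" cluster $(1)$–$(3)$ and a "metric" condition $(4)$, so I would first dispose of the purely linear equivalences and then bridge to the metric statement via the good $\ell_\infty$-tree machinery.

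**First** I would establish $(1)\Longleftrightarrow(2)\Longleftrightarrow(3)$, which is immediate from the preceding renorming theorem for duals of separable spaces with a weak$^*$ unconditional asymptotic structure: that theorem states precisely that $S_Z(X)\leq\omega$, the existence of an equivalent norm with AUC$^*$ dual, and the existence of an equivalent norm with AMUC$^*$ dual are all equivalent under our standing hypotheses. So no new work is needed here; I would simply cite it.

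**Next** I would prove the implication $\neg(1)\Longrightarrow\neg(4)$, i.e.\ that $S_Z(X)>\omega$ forces equi-Lipschitz embeddability of the family $(G^{\oslash n})_{n\geq 1}$. Here I invoke the Corollary above: since $X$ is separable, $S_Z(X)>\omega$, and $X^*$ has a weak$^*$ unconditional asymptotic structure, $X^*$ contains good $\ell_\infty$-trees of arbitrary height almost isometrically. Then Theorem \ref{Swift} (Swift's theorem) gives that every non-trivial countably branching bundle graph — in particular each $G^{\oslash n}$, which is itself such a graph — embeds Lipschitz into $X^*$ with distortion less than $6+\varepsilon$. Since this distortion bound is uniform in $n$, the family $(G^{\oslash n})_{n\geq 1}$ equi-Lipschitz embeds into $X^*$, contradicting $(4)$.

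**Finally**, for $(1)\Longrightarrow(4)$, I would argue contrapositively using the AMUC non-embeddability theorem of Baudier and co-authors. Assuming $(1)$, by $(1)\Longleftrightarrow(3)$ we may renorm $X$ so that $X^*$ is AMUC$^*$; by the Proposition of the previous section, AMUC$^*$ on a dual space coincides with AMUC, so $X^*$ is an AMUC Banach space. Then theorem 4.1 of \cite{Baudierandco} asserts that the family $(G^{\oslash n})_{n\geq 1}$ does not equi-Lipschitz embed into any AMUC space, which is exactly $(4)$. The one point requiring a little care is that equi-Lipschitz embeddability is a bi-Lipschitz-invariant, hence renorming-invariant, notion, so it is legitimate to verify $(4)$ after passing to the equivalent AMUC$^*$ norm. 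I expect the main (though modest) obstacle to be precisely this bookkeeping — checking that the metric statement $(4)$ is insensitive to the choice of equivalent norm and that each $G^{\oslash n}$ genuinely falls under the hypotheses of both metric theorems — rather than any substantial new estimate, since all the heavy lifting has already been done in the linear renorming theory and in the quoted embedding/non-embedding results.
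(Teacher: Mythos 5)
Your proposal is correct and follows essentially the same route the paper intends: the linear equivalences $(1)\Leftrightarrow(2)\Leftrightarrow(3)$ come from the renorming theorem of Section~3, the implication $\neg(1)\Rightarrow\neg(4)$ from the good $\ell_\infty$-tree corollary combined with Swift's theorem (with the uniform $6+\varepsilon$ distortion giving equi-Lipschitz embeddability), and $(1)\Rightarrow(4)$ from the AMUC non-embeddability theorem of Baudier et al.\ after passing to an equivalent AMUC$^*$ (hence AMUC) dual norm. Your bookkeeping remarks --- that each $G^{\oslash n}$ is itself a non-trivial countably branching bundle graph and that equi-Lipschitz embeddability is renorming-invariant --- are exactly the points that need checking, and they check out.
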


As a corollary, we get the following stability result.

\begin{crl}

The class of $AUC$ renormable spaces is stable under coarse-Lipschitz embeddings inside the class of duals of separable Banach spaces with a weak$^*$ asymptotic unconditional structure.

\end{crl}

Also note that following the same procedure the remark from section $2$ concerning property $(*)$ yields the following result.

\begin{crl}

Let $X^*$ separable dual with property $(*)$. Then every non-trivial countably branching bundle graph embeds Lipschitz in $X^*$ with distortion less than $6+\varepsilon$.

\end{crl}

\subsection{Bundle graphs and $c_0$-spreading models}  Let us first recall the definition of a spreading model. Let $X$ be a Banach space. By using Ramsey's theorem, one can show that for every bounded sequence $(x_n)_{n\geq 1}\subset X$ there a subsequence $(y_n)_{n\geq 1}$ such that for all $k\geq 1$ and  for all $a_1,\dots, a_k\in \R$ the limit $\lim_{n_1<\dots<n_k} \norme{\sum_{i=1}^ka_iy_{n_i}}$ exists. Let $(e_i)_{i\geq 1}$ be the canonical basis of $c_{00}$. If the sequence $(y_n)_{n\geq 1}$ is not convergent the quantity $$\norme{\sum_{i=1}^ka_ie_i}=\lim_{n_1<\dots<n_k} \norme{\sum_{i=1}^ka_iy_{n_i}}$$ defines a norm on $c_{00}$. The completion of the space $(c_{00},\norme{.})$ is called \emph{spreading model} associated with the fundamental sequence $(e_i)_{i\geq 1}$ and generated by the sequence $(y_n)_{n\geq 1}$. Note that the fundamental sequence is spreading in the sense that for all $k\geq 1$, for all $a_1,\dots,a_k\in \R$ and for all $1\leq n_1<\dots<n_k$ we have the norm equality $\norme{\sum_{i=1}^ka_ie_i}=\norme{\sum_{i=1}^ka_ie_{n_i}}$. If $E$ is another Banach space we shall say that $X$ has an $E$-spreading model if $X$ has a spreading model isomorphic to $E$. We refer to \cite{Beauzamy} for a presentation of the theory of spreading models.

Let us recall the following well known  result concerning $c_0$-spreading models.

\begin{prp}

If $X$ has a $c_0$-spreading model then for every $\varepsilon>0$ there is a normalized sequence $(x_k)_{k\geq 1}$ in  $X$ such that:  \begin{align*}
\forall k\geq 1,\ \forall a_1,\dots,a_k\in\R, \ \forall n_k>...>n_1\geq k, \\
\frac{1}{1+\varepsilon} \max_{1\leq i\leq n} \abs{a_i} \leq \norme{\sum_{i=1}^na_ix_{n_i}}\leq (1+\varepsilon)  \max_{1\leq i\leq n} \abs{a_i} \ \ (\Box)
\end{align*}

\end{prp}

Moreover, let us recall that a $c_0$-spreading model is always generated by a weak-null sequence in $X$. As a consequence it is clear that a Banach space $X$ with property AMUC does not have $c_0$-spreading models. In fact we can even prove the following result.

\begin{thm}

If $X$ has a $c_0$-spreading model, then $\ell_\infty^n\in \{X\}_n$ for all $n\geq 1$. Consequently, every non-trivial countably branching bundle graph embeds Lipschitz in $X$ with distortion less than $6+\varepsilon$.

\end{thm}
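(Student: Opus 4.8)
The plan is to prove the two assertions in sequence, deriving the asymptotic structural statement $\ell_\infty^n\in\{X\}_n$ first and then invoking the already-available machinery to deduce the embeddability conclusion. The second half is immediate: once we know $\ell_\infty^n\in\{X\}_n$ for every $n$, the weak analogue of the lemma relating asymptotic structure to good $\ell_p$-trees (the weak version of the lemma stated just before Corollary on good $\ell_\infty$-trees) gives that $X$ contains good $\ell_\infty$-trees of arbitrary height almost isometrically, and then Theorem \ref{Swift} of Swift yields the Lipschitz embeddability of every non-trivial countably branching bundle graph with distortion less than $6+\varepsilon$. So the real content is establishing $\ell_\infty^n\in\{X\}_n$.

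For that first part, I would start from the preceding proposition, which guarantees a normalized sequence $(x_k)_{k\geq 1}$ satisfying the spreading-type estimate $(\Box)$, so that for any choice of indices $n_1<\dots<n_m$ (all at least some threshold) the block $(x_{n_i})_{i=1}^m$ is $(1+\varepsilon)^2$-equivalent to the unit vector basis of $\ell_\infty^m$. The key point I would exploit is the reminder that a $c_0$-spreading model is always generated by a \emph{weakly-null} sequence; hence after passing to a subsequence I may assume $(x_k)$ is weakly-null. To land in the asymptotic structure $\{X\}_n$ I must verify the asymptotic game condition: given any finite-codimensional (weak-null-detecting) subspaces $X_1,\dots,X_n$ presented one at a time, I need to select normalized vectors $y_i\in S_{X_i}$ whose span is close to $\ell_\infty^n$. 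Since $(x_k)$ is weakly-null, for each finite-codimensional subspace $X_i$ the distance $\operatorname{dist}(x_k,S_{X_i})\to 0$ as $k\to\infty$; so I can inductively pick indices $n_1<n_2<\dots<n_n$, large enough at each stage, such that $x_{n_i}$ is within $\delta$ of a normalized vector $y_i\in S_{X_i}$. By a standard perturbation argument the resulting $(y_i)_{i=1}^n$ remains, say, $(1+\varepsilon)^3$-equivalent to the unit vector basis of $\ell_\infty^n$, which is exactly the required game outcome up to the constant. Letting $\varepsilon\to 0$ and invoking the non-distortion of $c_0$ (James's theorem quoted in the excerpt) to absorb the constant gives $\ell_\infty^n\in\{X\}_n$ on the nose.

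The main obstacle I anticipate is the interplay between the \emph{uniform} spreading estimate $(\Box)$, which is about blocks of a single fixed sequence, and the \emph{adversarial} nature of the asymptotic game, where the finite-codimensional subspaces are revealed sequentially and may depend on the previously chosen vectors. The spreading-model sequence does not automatically live in prescribed subspaces, so the whole point is to use weak-nullity to ``steer'' the spreading sequence into the subspaces the adversary hands me, while keeping the $\ell_\infty$-equivalence intact under the small-norm perturbations this steering introduces. Controlling these two sources of error simultaneously -- the $(1+\varepsilon)$ from the spreading model and the $\delta$ from the approximation into $S_{X_i}$ -- and then cleaning up the final constant via James's non-distortion result is the delicate bookkeeping; everything else is routine. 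I would therefore present the argument in two clearly separated steps, first fixing $\varepsilon$ and producing an $n$-dimensional space $(1+\varepsilon)$-equivalent to $\ell_\infty^n$ in $\{X\}_n$ up to a constant, and then letting $\varepsilon\to 0$ together with the non-distortion optimization exactly as in the proof of the lemma optimizing constants in the asymptotic structure earlier in the paper.
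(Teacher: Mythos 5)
Your argument is correct and is essentially the paper's: both rest on the weak-nullness of the normalized sequence satisfying $(\Box)$ together with a perturbation of suitably chosen terms into the adversary's finite-codimensional subspaces, the only difference being that the paper packages this as the construction of a weak-null unrooted tree (via the indexing $\phi(s)=k+\sum s_i$, which makes indices increase along branches so that $(\Box)$ applies) followed by an appeal to the tree-reformulation lemma of Section 1, whereas you inline that lemma's proof by playing the asymptotic game directly. Note only that the final appeal to James's non-distortion theorem is superfluous, since $\varepsilon$ was arbitrary from the start and the accumulated constant $(1+\varepsilon)^3$ already tends to $1$, which gives $\ell_\infty^n\in\{X\}_n$ directly from the definition.
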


\begin{proof}

Fix $n\geq 1$ and $\varepsilon>0$. We can apply the preceding proposition to get a normalized sequence $(x_k)_{k\geq 1}$ satisfying the property $(\Box)$ above. This sequence generates a $c_0$-spreading model and thus it is necessarily weak-null.

For all $s=(s_1,\dots, s_k)\in T_n$ non empty, let $y_s=x_{\phi(s)}$ where $\phi(s)=k+\sum_{i=1}^k s_i$. Then $(x_s)_{s\in T_n\backslash\{\emptyset\}}$ is a weak-null normalized unrooted tree and since $n\leq \phi(s_{\lvert 1})< \dots < \phi(s)$ for every $s\in T_n$ of length $n$, property $(\Box)$ above yields that every branch of this unrooted tree is $(1+\varepsilon)^2$ equivalent to the unit vector basis of $\ell_\infty^n$.

From this we deduce that $\ell_\infty^n\in\{X\}_n$ for every $n\geq 1$ and as before we get that $X$ contains good $\ell_\infty$ trees of arbitrary height almost isometrically. The conclusion follows from \ref{Swift}.

\end{proof}

\section{Questions and comments}

\subsection{Szlenk index of WCG spaces is determined by separable quotients}

It is well known that the Szlenk index is separably determined and this usually allows to get rid of separability assumptions in results concerning it. Unfortunately passing to a separable subspace won't be sufficient to do so in theorem \ref{thmB} and \ref{thmC} since we might lose the asymptotic unconditional property of the dual space in the process. What is required is to pass to a separable quotient while conserving the same information on the Szlenk index and this is obviously way more difficult since it is not even known if a separable quotient exists. Still this can be done if we restrict ourselves to weakly compactly generated (WCG) Banach spaces. Let us recall that a Banach space $X$ is WCG if there is a weakly compact subset $K$ of $X$ such that $\overline{\text{span}}(K)=X$.

\begin{thm}

Let $X$ be a WCG Banach space and let $\alpha$ be a countable ordinal. If $S_Z(X)>\alpha$ then there is a subspace $Y$ of $X$ such that the quotient $Z=X\backslash Y$ is separable and satisfies $S_Z(Z)>\alpha$.

\end{thm}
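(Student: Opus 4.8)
The plan is to combine the separable determination of the Szlenk index with the special projectional structure available in WCG spaces (the existence of a Projectional Resolution of the Identity, or equivalently a projectional generator), and to use this structure to pass to a \emph{quotient} rather than to a subspace. The starting point is the standard fact that $S_Z(X) = \sup_Y S_Z(Y)$ where the supremum runs over all separable subspaces $Y$ of $X$; equivalently, the hypothesis $S_Z(X) > \alpha$ produces a separable subspace $Y_0 \subset X$ with $S_Z(Y_0) > \alpha$. The difficulty is that $Y_0$ being a subspace gives information about the restriction maps $X^* \to Y_0^*$, whereas we want a separable \emph{quotient} $Z = X/Y$, whose dual is a weak$^*$-closed subspace $Y^\perp \subset X^*$; so the Szlenk-index witness, which lives in $B_{Y_0^*}$, must be lifted to a weak$^*$-null tree lying inside the annihilator $Y^\perp$ of the chosen subspace $Y$.

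The key structural input is that WCG spaces admit a norming $M$-basis and a projectional resolution whose adjoint projections $P_\gamma^*$ on $X^*$ are weak$^*$-to-weak$^*$ continuous. First I would extract, using Lemma~\ref{Szlenktrees} applied in $Y_0^*$, a fixed $\varepsilon > 0$ and, for each $n$, a weak$^*$-null separated tree in $B_{Y_0^*}$ realizing $S_Z(Y_0) > \omega$ (and more generally the ordinal witness for $S_Z > \alpha$). Because the tree is countable and the projectional resolution is long but each element is supported on a countable range, I would choose a single countable coordinate set $\Gamma_0 \subset \Gamma$ capturing $Y_0$ together with all the tree elements and their weak$^*$ limits; letting $P$ be the associated norm-one projection and $Y = \ker P^*|_{\text{predual}}$ (concretely, take $Y = (\operatorname{ran} P^*)_\perp$ so that $Y^\perp = \operatorname{ran} P^*$ is weak$^*$-closed, separable in the weak$^*$ sense, and contains the tree), I obtain a separable quotient $Z = X/Y$ with $Z^* = Y^\perp$. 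The point is that $P^*$ fixes the tree elements, so they survive as a weak$^*$-null separated tree of the same height inside $B_{Z^*}$.

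The main step is then to verify that these transported trees still certify $S_Z(Z) > \alpha$. For the case $\alpha$ below $\omega$ this is immediate from the tree characterization; for a general countable $\alpha$ the cleanest route is to invoke the Szlenk-index derivation directly: one shows that the adjoint projection $P^*$ induces a weak$^*$-continuous map $B_{X^*} \to B_{Z^*}$ under which the derived sets behave monotonically, so that $s_\varepsilon^\beta(B_{Z^*}) \neq \emptyset$ whenever $s_\varepsilon^\beta(B_{Y_0^*}) \neq \emptyset$, giving $S_Z(Z) \geq S_Z(Y_0) > \alpha$. Finally separability of $Z$ follows because $Z^* = Y^\perp = \operatorname{ran} P^*$ is the dual of a space generated by the countable coordinate set $\Gamma_0$, and a WCG space whose dual is weak$^*$-separable along a countable set is itself separable.

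I expect the genuine obstacle to be the lifting/transport step: one must choose the subspace $Y$ so that simultaneously (i) the quotient $Z = X/Y$ is separable, and (ii) the separated weak$^*$-null tree from $Y_0^*$ actually lands in $Z^* = Y^\perp$ while retaining both its weak$^*$-null character and the lower norm bound $\varepsilon$. This is exactly where WCG is used and where a mere separable subspace is insufficient; the projectional resolution of the identity is the tool that makes the range of the adjoint projection weak$^*$-closed and the transported tree still weak$^*$-null, and getting those two compatibility conditions to hold at once is the technical heart of the argument.
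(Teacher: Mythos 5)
Your proposal takes a genuinely different route from the paper's. The paper uses no projectional structure at all: it replaces the Szlenk index by the $\ell_1^+$ index of Alspach--Judd--Odell, which (when $X\not\supseteq\ell_1$) is witnessed by a normalized tree of order $\alpha$ sitting in $X$ itself rather than in a dual; it then picks countably many norming functionals (one for each node, one for each rational positive combination along each branch) and takes $Y$ to be the pre-annihilator of this countable family. Those functionals survive in $Y^\perp=Z^*$, so the image tree in $Z=X/Y$ is still normalized with $K$-$\ell_1^+$ branches, and WCG is used only to conclude that $Z$, being a WCG space with weak$^*$-separable dual, is separable; the case $\ell_1\subseteq X$ is treated separately. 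Your plan instead combines separable determination of $S_Z$ with the projectional resolution of the identity. That is a legitimate alternative, and WCG in fact gives you more than you use: every separable subspace of a WCG space is contained in the range of a norm-one projection $P$ with separable range (the separable complementation property).

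As written, though, the middle of your argument has a gap, located exactly where you place ``the technical heart''. The witnesses for $S_Z(Y_0)>\alpha$ live in $B_{Y_0^*}$, and there is no canonical way to transport them into $Y^\perp\subset X^*$: a functional on $Y_0$ has many Hahn--Banach extensions, none of which need lie in $\operatorname{ran}P^*$, and weak$^*$-nullity for $\sigma(Y_0^*,Y_0)$ does not pass to $\sigma(X^*,X)$ for arbitrary extensions. Likewise, $P^*$ is a weak$^*$-continuous map of $B_{X^*}$ onto $B_{Z^*}$, not of $B_{Y_0^*}$, so the claim that it makes the derived sets $s_\varepsilon^\beta$ behave monotonically between $B_{Y_0^*}$ and $B_{Z^*}$ is not justified; weak$^*$-continuous surjections do not in general preserve Szlenk derivations, and the correct comparison map would be the restriction $Z^*\to Y_0^*$. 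The good news is that none of this is needed: once you have a norm-one projection $P$ with separable range containing $Y_0$, set $Y=\ker P$; then $Z=X/Y$ is isometric to $\operatorname{ran}P$, which is a separable subspace of $X$ containing $Y_0$, and monotonicity of the Szlenk index under passing to subspaces gives $S_Z(Z)=S_Z(\operatorname{ran}P)\geq S_Z(Y_0)>\alpha$ directly, for every countable $\alpha$ and with no trees at all. With that replacement your argument closes; what it buys over the paper's proof is uniformity (no case distinction on $\ell_1\subseteq X$ and no auxiliary index), at the price of invoking the heavier Amir--Lindenstrauss machinery, whereas the paper's argument needs only that WCG passes to quotients and that the density of a WCG space equals the weak$^*$-density of its dual.
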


As we shall see the main reason for working with WCG Banach spaces is that this property passes to quotients and that the density of such a space is equal to the weak$^*$ density of its dual. This can be found in \cite{bookHajek} in the first section of chapter $13$.

We will prove this result by using the $\ell_1^+$ index introduced by D. Alspach, R. Judd and E. Odell in \cite{AJO} which is known to be equal to the Szlenk index for Banach spaces not containing $\ell_1$. Let us recall that a space $X$ has $\ell_1^+$ index $I^+(X)>\alpha$ for some countable ordinal $\alpha$ if and only if there is a normalized tree $(x_s)_{s\in T}$ of order $\alpha$ in $X$ and a constant $K\geq 1$ such that every branch $(x_s)_{s\in\beta}$ of the tree is a $K$-$\ell_1^+$ sequence that is to say it is $K$-basic and satisfies for all sequence $(a_s)_{s\in\beta}$ in $\R_+$ the following inequality $$ \frac{1}{K}\sum_{s\in\beta}a_s\leq \norme{ \sum_{s\in\beta}a_sx_s}.$$

We refer to their paper for precise definitions and for the related result. This tool in hands let us prove the theorem.

\begin{proof}

Let us start by assuming that $X$ does not contain $\ell_1$. Then $I^+(X)=S_Z(X)>\alpha$ and we can find a normalized tree $(x_s)_{s\in T}$ of order $\alpha$ in $X$ witnessing this as recalled above. For every element $s$ in $T$ pick a norming functional $f_s$ of $x_s$ and for every branch $\beta$ of $T$ and every sequence $a=(a_s)_{s\in\beta}$ in $\Q_+$ pick a norming functional $g_{a,\beta}$ of  $\sum_{s\in\beta}a_sx_s$. Then let $F$ be the set of all functionals $f_s$ and $g_{a,\beta}$ and let $Y=F^\top$ be the pre-orthogonal of $F$. It is a well known fact that the dual of the quotient $Z=X\backslash Y$ is isometric to the weak$^*$ closure $\overline{F}^{w^*}$ of $F$ in $X^*$ which is weak$^*$ separable by construction. Now $Z$ is WCG as quotient of a WCG space and thus it is separable since its density is the same as the weak$^*$ density of its dual. Moreover the tree $(\bar{x}_s)_{s\in T}$ will be normalized in $Z$ thanks to the functionals $f_s$ and its branches will all be $K-\ell_1^+$ sequences over $\Q$ thus also over $\R$ thanks to the fonctionals $g_{a,\beta}$. Consequently the $\ell_1^+$ index of $Z$ will also be strictly greater than $\alpha$. Now $Z$ cannot contain $\ell_1$ as a quotient of a space not containing it (it is well known that $\ell_1$ can be lifted from quotients) so $S_Z(Z)=I^+(Z)>\alpha$ and we are done.

Now if $X$ contains $\ell_1$ the same idea allows to build a separable quotient of $X$ which also contains $\ell_1$ and this is enough to conclude.

\end{proof}

This allows to give a non-separable version of theorems \ref{thmB} and \ref{thmC} combined for WCG Banach spaces.

\begin{thm}

Let $X^*$ be the dual of a WCG Banach space $X$ and assume that $X^*$ has a weak$^*$ asymptotic unconditional structure. Also let $G$ be any non-trivial countably branching bundle graph. Then the following properties are equivalent.
\begin{enumerate}

\item{ $S_Z(X)\leq \omega$ }

\item{ $X^*\in \langle AUC^* \rangle$ }

\item{ $X^*\in \langle AMUC^* \rangle$ }

\item{ The family $(G^{\oslash n})_{n\geq 1}$ does not equi-Lipschitz embed into $X^*$.}

\end{enumerate}

\end{thm}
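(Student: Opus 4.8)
The plan is to reduce this non-separable statement to its separable counterpart by passing to a well-chosen separable quotient, which is precisely what the preceding separable-quotient theorem was built for. I would first observe that the chain $(1) \Rightarrow (2) \Rightarrow (3) \Rightarrow (4)$ needs neither separability nor the WCG hypothesis, and dispose of it quickly. Indeed $(1) \Rightarrow (2)$ is contained in the general renorming theorem \ref{renorming}; $(2) \Rightarrow (3)$ follows from $\widehat{\delta}^* \geq \bar{\delta}^*$, so that every AUC$^*$ norm is AMUC$^*$; and for $(3) \Rightarrow (4)$ I would note that an AMUC$^*$ dual norm is in particular an AMUC norm, by the proposition identifying the two properties on dual spaces, after which theorem 4.1 of \cite{Baudierandco} applies to the AMUC space $X^*$ and forbids the equi-Lipschitz embedding (the passage through an equivalent norm is harmless, embeddability being a bi-Lipschitz invariant).

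The whole difficulty is therefore concentrated in $(4) \Rightarrow (1)$, which I would prove by contraposition: assuming $S_Z(X) > \omega$, I want to equi-Lipschitzly embed the family $(G^{\oslash n})_{n\geq 1}$ into $X^*$. Here I would invoke the separable-quotient theorem with $\alpha = \omega$ to produce a subspace $Y$ for which $Z = X \backslash Y$ is separable and $S_Z(Z) > \omega$, and then identify $Z^*$ isometrically with $Y^\perp \subseteq X^*$. The relevant facts are that $Y^\perp$ is weak$^*$-closed and weak$^*$-homeomorphically embedded in $X^*$ (the weak$^*$ topology of $Z^*$ is the one induced through the quotient map, hence is the trace of the weak$^*$ topology of $X^*$) and that the weak$^*$-closed finite-codimensional subspaces of $Z^*$ are exactly the traces $Y^\perp \cap W$ with $W \in \wcof{X^*}$. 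Granting that $Z^*$ carries a weak$^*$ unconditional asymptotic structure, the separable embeddability corollary applies to $Z$ and embeds every non-trivial bundle graph into $Z^*$ with distortion less than $6 + \varepsilon$; since $Z^* = Y^\perp \hookrightarrow X^*$ isometrically, each $G^{\oslash n}$ embeds into $X^*$ with the same uniform distortion, which is $\neg(4)$.

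The main obstacle, and the only genuinely new ingredient, is to verify that the weak$^*$ unconditional asymptotic structure passes from $X^*$ to the quotient dual $Z^* = Y^\perp$. I would establish this by a strategy-transfer argument in the game formulation: let $C$ be the unconditionality constant for $X^*$, and whenever the winning $X^*$-strategy prescribes a subspace $X_i \in \wcof{X^*}$ as a function of the previously chosen vectors, I answer in the $Z^*$-game with $V_i = Y^\perp \cap X_i$, which is weak$^*$-closed and of finite codimension in $Z^*$. Every normalized vector the opponent may select in $V_i$ already lies in $S_{X_i}$, so the sequence produced inside $Z^*$ is a legitimate run of the $X^*$-game and is therefore $C$-unconditional; hence $Z^*$ has a weak$^*$ unconditional asymptotic structure with the same constant. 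I expect this lemma to be the crux, the rest being bookkeeping; the only mild care required is checking that finite codimension and weak$^*$-closedness are genuinely preserved by intersecting with $Y^\perp$, which is clear since each $X_i$ is a finite intersection of kernels of elements of $X$ whose action on $Y^\perp$ factors through $Z$.
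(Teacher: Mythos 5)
Your proposal is correct and follows exactly the route the paper intends: the implications $(1)\Rightarrow(2)\Rightarrow(3)\Rightarrow(4)$ are separability-free, and $(4)\Rightarrow(1)$ is obtained by contraposition through the WCG separable-quotient theorem, identifying $Z^*$ with $Y^\perp\subseteq X^*$ isometrically and weak$^*$-homeomorphically. The paper leaves the reduction implicit, and your strategy-transfer verification that the weak$^*$ unconditional asymptotic structure passes from $X^*$ to the quotient dual $Y^\perp$ correctly supplies the one detail that needed checking.
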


In particular since every reflexive Banach space is WCG the separability assumption from the original theorem from \cite{Swift} can be removed.

\begin{thm}

Let $X$ be a reflexive Banach space with an unconditional asymptotic structure.  Then the following properties are equivalent.
\begin{enumerate}

\item{ $S_Z(X^*)\leq \omega$ }

\item{ $X\in \langle AUC \rangle$ }

\item{ The family $(G^{\oslash n})_{n\geq 1}$ does not equi-Lipschitz embed into $X$.}

\end{enumerate}

\end{thm}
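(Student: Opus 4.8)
The plan is to deduce this statement directly from the preceding WCG theorem (the non-separable version of Theorems \ref{thmB} and \ref{thmC}) by exploiting reflexivity to view $X$ itself as a dual space. First I would record two elementary facts: a reflexive space is automatically WCG, since its closed unit ball is weakly compact and spans the space, and likewise $X^*$ is reflexive and hence WCG. The decisive point is the canonical identification $X = (X^*)^*$ together with the observation that, under reflexivity, the weak$^*$ topology on $(X^*)^*$ coincides with the weak topology of $X$. Consequently every asymptotic notion carrying a $w^*$ decoration in the dual setting collapses onto its ordinary weak counterpart on $X$: the unconditional asymptotic structure assumed on $X$ is exactly a weak$^*$ unconditional asymptotic structure of $(X^*)^*$, the property AUC on $X$ coincides with AUC$^*$ on $(X^*)^*$ (so $\langle AUC \rangle = \langle AUC^* \rangle$ here), and similarly AMUC corresponds to AMUC$^*$.

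Second I would apply the preceding WCG theorem with the base space taken to be $X^*$ rather than $X$. Since $X^*$ is WCG and its dual $(X^*)^* = X$ carries a weak$^*$ unconditional asymptotic structure by the translation just described, all the hypotheses of that theorem are satisfied. Reading off its four equivalent conditions with base space $X^*$ yields: $S_Z(X^*) \leq \omega$; $X \in \langle AUC^* \rangle$; $X \in \langle AMUC^* \rangle$; and the non-equi-Lipschitz embeddability of $(G^{\oslash n})_{n \geq 1}$ into $(X^*)^* = X$. The first, second and fourth are precisely items (1), (2) and (3) of the statement once one rewrites $\langle AUC^* \rangle$ as $\langle AUC \rangle$ via reflexivity, while the third is the automatically equivalent AMUC renorming condition, which one simply omits from the final list.

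There is no deep obstacle here, as all the substantive content has already been packaged into the earlier WCG result; the argument is essentially a bookkeeping exercise. The only step requiring genuine care is the systematic translation between the weak$^*$ language of dual spaces and the weak language of $X$. This dictionary must be verified on each relevant object of Section 1 and Section 3 — the moduli $\modamucw{X}{t}$ and $\modconvunifw{X}{t}$, the asymptotic-structure and unconditionality definitions, and the tree reformulations — so that no hypothesis is silently weakened or strengthened in passing between $X$ and $(X^*)^*$. Once this identification is confirmed on the pertinent definitions, the chain of equivalences transfers verbatim.
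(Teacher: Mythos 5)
Your argument is correct and is exactly the route the paper takes: the paper derives this theorem in a single line from the preceding WCG theorem, using that every reflexive space is WCG and applying that theorem with base space $X^*$, so that $(X^*)^*=X$ and all weak$^*$ notions collapse onto their weak counterparts. The only difference is that you spell out the bookkeeping (the weak$^*$/weak dictionary under reflexivity and the omission of the redundant AMUC item), which the paper leaves implicit.
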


\subsection{About asymptotic midpoint convexity}

\begin{quest}

Is an equivalent AMUC$^*$ norm on a dual space a dual norm?

\end{quest}

It is known to be true for AUC$^*$ norms because such a norm has to be weak$^*$ lower semicontinuous and thus has to be a dual norm (for example see \cite{DGZ} chapter $1$, fact 5.4 ).

\begin{quest}

Is it possible to develop a specific renorming theory for AMUC$^*$?

\end{quest}

As mentioned in the introduction a dual property for AMUC$^*$ or for property $(*)$ is lacking in order to do a renorming à la \cite{KOS} or \cite{GKL}. Moreover it is not clear whether one can find a suitable derivation index to go either with the symmetric UKK$^*$ property we saw in section $3$ or with property $(*)$ or not (or in an other way to find a suitable measure of non-compacteness to go in the direction of \cite{LPR}). It is also not known if it is possible to find a correct equivalent expression of property $(*)$ in order to get a partial result of the type \cite{UKK} based on an Enflo-like renorming to get a symmetric UKK$^*$ écart.

\begin{quest}

If a dual space is AMUC$^*$ then can we find an equivalent $p$-AMUC$^*$ norm for some $1\leq p<\infty$?

\end{quest}

The same question can be asked for property AMUC. Such a result would in particular yield good estimates of the distortion of the families generated by a countably branching bundle graph into an AMUC space as mentioned in \cite{Baudierandco}

\begin{quest}

If a space is AMUC does it have weak-Slzenk index $w-S_Z(X)<\infty$? Does it have weak-Szlenk index $\omega$?

\end{quest}

Again we saw that this is true when the space has a separable dual and an unconditional asymptotic structure. The first question can be reformulated in the following way: if a space is AMUC does it have PCP? In relation with the comments of the following paragraph one can also ask whether AMUC implies weaker versions of PCP like convex PCP or not. A negative answer to any of these question would provide a counter example to the question of the equivalence between AMUC and AUC up to renorming since since an AUC space has finite weak-Szlenk index. Let us point out that \cite{lenses} gave partial positive answers to this question in the case of Banach spaces with an unconditional basis and that they settled the question of isometric equivalence by constructing an equivalent norm on $\ell_2$ which is AMUC but not AUC. Let us also point out that there is no renorming theory around  weak-Szlenk index yet (the obstruction is always a lack of weak compactness).

\subsection{About James tree-spaces}

\begin{quest}

Is it possible to improve the exponent for AUC given in \cite{Girardi} (3-AUC) for the predual of the James tree-space $JT$ on a dyadic tree? What exponent can we get for its dual? In view of the definition of this James tree-space one expects to get an exponent $2$.

\end{quest}

\begin{quest}

Is the predual of the James-tree space $JT_\infty$ on a countably branching tree AMUC? Is its dual AMUC$^*$? This would provide a counter-example to several questions of the preceding paragraph since the predual of $JT_\infty$ does not have PCP. Let us point out that this space still has convex PCP (see  \cite{GMS}).

\end{quest}

\bibliographystyle{plain}
\bibliography{biblio}
\nocite{*}

\end{document}